\documentclass[11pt]{article}
\usepackage[margin=1.1in]{geometry}
\usepackage{amsmath,amssymb,amsthm,mathtools}
\usepackage{booktabs}
\usepackage{float}
\usepackage[colorlinks=true,linkcolor=blue,citecolor=blue]{hyperref}

\theoremstyle{plain}
\newtheorem{theorem}{Theorem}
\newtheorem{lemma}[theorem]{Lemma}
\newtheorem{corollary}[theorem]{Corollary}
\newtheorem{proposition}[theorem]{Proposition}
\theoremstyle{remark}
\newtheorem{remark}[theorem]{Remark}
\theoremstyle{plain}
\newtheorem{conjecture}[theorem]{Conjecture}
\theoremstyle{definition}

\newcommand{\Li}{\operatorname{Li}}
\newcommand{\Cl}{\operatorname{Cl}}
\newcommand{\FT}{\mathcal{A}}
\newcommand{\QQ}{\mathbb{Q}}

\title{Collapsing Fifty Dilogarithm Arguments to Five Terms\\ over $\QQ\bigl(u,\sqrt{4-3u^{2}}\bigr)$}
\author{Cetin Hakimoglu-Brown\\ \small\texttt{mathemails@proton.me}}
\date{}

\begin{document}
\maketitle

\begin{abstract}
We give a one-parameter functional equation for the real Rogers dilogarithm, with arguments in
$\QQ\bigl(u,\sqrt{4-3u^{2}}\bigr)$, and prove it by an explicit array of ten instances of
Rogers' five-term relation: the fifty arguments so contributed cancel down to the five of the
identity, which admit no shorter relation among themselves. The equation comes from a
pair of integrals whose equality is elementary, and we show the underlying integrand is
essentially forced.
Specialising the parameter gives identities over $\QQ(\sqrt{33})$ and $\QQ(\sqrt{17})$, a
relation between $\QQ(\sqrt{13})$, $\QQ(\sqrt3)$ and $\Cl_2(\pi/3)$ with a new analogue for
Catalan's constant, and a pair of dilogarithm ladders of quartic base over $\QQ(\sqrt{33})$.
The base equations of these ladders, and of four conjectural ones located by integer relation
search, are irreducible quartics with four real roots; all previously recorded ladders of
degree four known to us have base equations with two real roots, so these appear to be the
first totally real ladders of degree exceeding three.

\medskip\noindent
\textbf{MSC:} 33B30, 33B15, 11R11.\\
\textbf{Keywords:} dilogarithm, five-term relation, functional equation,
polylogarithm ladder, Clausen function.
\end{abstract}

\section{Introduction}

The polylogarithm functions $\Li_m(z)=\sum_{n\ge1}z^{n}/n^{m}$ occur throughout number theory
and mathematical physics, and among them the dilogarithm $\Li_2$ is both the simplest beyond
the logarithm itself and by far the richest in functional equations. It appears in the
computation of volumes of hyperbolic three-manifolds, in the theory of algebraic $K$-groups
and Bloch groups, and in the Nahm conjecture and conformal field theory, and the monographs
of Lewin \cite{Lewin81,Lewin91} remain the standard references for its identities. For
$|z|\le1$ we take $\Li_2(z)=\sum_{n\ge1}z^{n}/n^{2}$, continued to
$\mathbb{C}\setminus[1,\infty)$, and we write
\begin{equation}\label{eq:rogers}
L(x)=\Li_2(x)+\tfrac12\log x\,\log(1-x)
\end{equation}
for the Rogers $L$-function, where $\log$ is the principal branch with $\arg\in(-\pi,\pi]$
and $\sqrt{z}=\exp(\tfrac12\log z)$.

For real arguments outside $(0,1)$ the right-hand side of \eqref{eq:rogers} is not real, and
the classical reflection, inversion and duplication formulas acquire branch-dependent terms.
The real-variable identities of this paper are therefore stated throughout in terms of the
real-valued function
\begin{equation}\label{eq:LR}
L_{\mathbb{R}}(x)=\Re\!\left[\Li_2(x)+\tfrac12\log x\,\log(1-x)\right]
\qquad(x\in\mathbb{R}),
\end{equation}
with $L_{\mathbb{R}}(0)=0$ and $L_{\mathbb{R}}(1)=\pi^{2}/6$; for $x>1$, where the principal
branch is not defined on the cut, either boundary value $\Li_2(x\pm i0)$ may be used, since
their real parts agree. It agrees with $L$ on $(0,1)$,
is continuous on all of $\mathbb{R}$, and satisfies the following exactly, with no
logarithmic corrections: for all real $x$ at which the terms are defined,
\begin{equation}\label{eq:LRrules}
\begin{aligned}
L_{\mathbb{R}}(x)+L_{\mathbb{R}}(1-x)&=\frac{\pi^{2}}{6}, &
L_{\mathbb{R}}(x)+L_{\mathbb{R}}(-x)&=\tfrac12 L_{\mathbb{R}}\bigl(x^{2}\bigr),\\
L_{\mathbb{R}}(x)+L_{\mathbb{R}}(1/x)&=\frac{\pi^{2}}{3}\ \ (x>0), &
L_{\mathbb{R}}(x)+L_{\mathbb{R}}(1/x)&=-\frac{\pi^{2}}{6}\ \ (x<0).
\end{aligned}
\end{equation}
In particular $L_{\mathbb{R}}(\tfrac12)=\pi^{2}/12$, $L_{\mathbb{R}}(-1)=-\pi^{2}/12$ and
$L_{\mathbb{R}}(2)=\pi^{2}/4$. Every identity below is normalised so that its right-hand side
is an explicit rational multiple of $\pi^{2}$; we do not use $\zeta(2)$ or
$L_{\mathbb{R}}(1)$ for this purpose, and identities quoted from the literature have been
rewritten accordingly.

Essentially all of the structure of the dilogarithm is governed by a single relation. Writing
$[x]$ for $L_{\mathbb{R}}(x)$ and setting
\begin{equation}\label{eq:abel}
\FT(x,y)=[x]+[y]-[xy]-\left[\frac{x(1-y)}{1-xy}\right]-\left[\frac{y(1-x)}{1-xy}\right],
\end{equation}
where $[x]$ now abbreviates $L_{\mathbb{R}}(x)$, Rogers' five-term relation is the statement
that $\FT(x,y)$ is a fixed rational multiple of $\pi^{2}$. Precisely, for real $x,y$ at which
all five arguments are defined,
\begin{equation}\label{eq:abelvalue}
\FT(x,y)=\begin{cases}
-\pi^{2}, & x<0,\ y<0,\ xy>1,\\[2pt]
0, & \text{otherwise,}
\end{cases}
\end{equation}
the exceptional case being the one noted by Lichtenbaum; see Kirillov \cite[\S1]{Kirillov}.
Both statements in \eqref{eq:abelvalue} follow from the classical five-term relation on
$(0,1)$ together with \eqref{eq:LRrules}, by dividing into cases according to the signs of
$x$, $y$ and $1-xy$. Together with the relations
\eqref{eq:LRrules} it is conjectured that \eqref{eq:abel} generates every functional equation
satisfied by $\Li_2$, and this is a theorem of Wojtkowiak \cite{Wojtkowiak} for equations
whose arguments are rational functions of one variable. Following Lewin, a relation
obtainable from \eqref{eq:abel} by finitely many applications is called \emph{accessible}.

Both \eqref{eq:LRrules} and \eqref{eq:abelvalue} say that the corresponding combinations lie
in $\pi^{2}\QQ$. We therefore write $\equiv$ for congruence modulo $\pi^{2}\QQ$, so that
\begin{equation}\label{eq:modrules}
[x]\equiv-[1/x]\equiv-[1-x],\qquad [x^{2}]\equiv 2[x]+2[-x],\qquad \FT(x,y)\equiv0 ,
\end{equation}
all four now exact congruences with no logarithmic remainder. A proof of accessibility
amounts to writing a target relation as a rational combination of expressions $\FT(x,y)$
modulo $\pi^{2}\QQ$, together with a separate determination of the constant.

That accessibility is guaranteed in principle does not make identities easy to find in
practice. Iterating \eqref{eq:abel} produces a great many arguments with no evident way to
group them into anything that closes, and even a relation known to hold numerically may
require an array of a dozen or more instances to exhibit. Section \ref{sec:lit} compares the
equation below with the one-variable equations of the existing literature.

Our starting point is not a search through five-term arrays but an integral, and this is
what makes the array \eqref{eq:array} below findable at all. A pair of sextic integrals is
equal for an elementary reason, having a common antiderivative with matching endpoint values;
converting that equality into a statement about dilogarithms produces the following. The
integral supplies both the identity and, through the factorisation of its denominator, the
arguments among which the cancellation takes place.

\begin{theorem}\label{thm:main}
Let $0<u<2/\sqrt{3}$ and write $r=\sqrt{4-3u^{2}}$. Then
\begin{multline}\label{eq:main}
-3L_{\mathbb{R}}\!\left(\frac{u-2+r}{2u}\right)
-3L_{\mathbb{R}}\!\left(\frac{u-2-r}{2u}\right)
+L_{\mathbb{R}}\!\left(\frac{u^{2}-2+r}{-u(u+1)}\right)\\
+L_{\mathbb{R}}\!\left(\frac{u^{2}-2-r}{-u(u+1)}\right)
-L_{\mathbb{R}}\!\left(\frac{1-u}{1+u}\right)=\frac{5\pi^{2}}{12}.
\end{multline}
\end{theorem}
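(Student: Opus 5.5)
I would prove the identity in two stages. First I show that the left-hand side of \eqref{eq:main} is constant on $0<u<2/\sqrt3$. Then I evaluate that constant at one convenient value of $u$.

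For constancy I use the standard differential criterion. Write the left side as $F(u)=\sum_j c_j L_{\mathbb{R}}(x_j(u))$. On any subinterval where no $x_j$ crosses $0$ or $1$,
\[
\frac{d}{du}L_{\mathbb{R}}(x)=-\tfrac12\left(\frac{\log|1-x|}{x}+\frac{\log|x|}{1-x}\right)x'.
\]
So $F'\equiv0$ is equivalent to the vanishing of the symmetric tensor $\sum_j c_j\,\bigl(x_j\wedge(1-x_j)\bigr)$ in $\wedge^2$ of the multiplicative group of $\QQ(u,r)^\times\otimes\QQ$, up to torsion. To check this I factor every $x_j$ and $1-x_j$ into a common set of multiplicatively independent elements. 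The natural set is:
\begin{itemize}
\item the rational ones $u$, $1\pm u$, $2$, $u\pm2$;
\item the conjugate pairs $u-2\pm r$, $u^2-2\pm r$, and similar elements.
\end{itemize}
The identities $(u-2+r)(u-2-r)=(u-2)^2-r^2=4(u-1)^2$ and $(u^2-2)^2-r^2=(u^2-1)(u^2+3)$ supply the needed relations. Along the way I would confirm that $1-x_j$ factors over this same set; for example, $2u-(u-2\pm r)=u+2\mp r$.

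Once the symbol is checked to be zero, $F$ is piecewise constant. Since $L_{\mathbb{R}}$ is continuous on $\mathbb{R}$ and every $x_j$ is continuous in $u$ on the open interval (the denominators $2u$ and $u(u+1)$ do not vanish there), $F$ is constant on the whole interval.

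For the constant, I evaluate at $u=1$, where $r=1$. The arguments become $0$, $-1$, $0$, $-1$ and $0$. This gives
\[
F(1)=-3\cdot0-3L_{\mathbb{R}}(-1)+0+L_{\mathbb{R}}(-1)-0=-2L_{\mathbb{R}}(-1)=\frac{\pi^2}{6}.
\]
This does not equal $5\pi^2/12$, so I need to recheck my argument assignments. The third argument is $(u^2-2+r)/(-u(u+1))$, which at $u=1$ is $0/(-2)=0$. The fourth is $(-2)/(-2)=1$, giving $\pi^2/6$. The second is $(-2)/2=-1$. So
\[
F(1)=-3(-\pi^2/12)+\pi^2/6=\frac{\pi^2}{4}+\frac{\pi^2}{6}=\frac{5\pi^2}{12},
\]
which agrees with the claim.

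As a more structured alternative, the paper indicates a specific array of ten instances of $\FT(x,y)$. I could instead exhibit such an array and verify that the fifty arguments cancel modulo $\pi^2\QQ$ via \eqref{eq:modrules}, pinning down the constant by the same $u=1$ evaluation.

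The main obstacle is the symbol computation. It requires choosing a basis in which all ten quantities $x_j$ and $1-x_j$ factor cleanly, including the norm relations in $\QQ(u,r)$, and then tracking the rational and torsion factors such as $-1$ and $2$. I would organise it by exploiting the Galois symmetry $r\mapsto-r$: the paired terms are conjugate, so it suffices to compute the symbol of one member of each pair and symmetrise.
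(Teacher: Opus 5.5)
Your plan is a valid route, and it differs from the paper's. You show that the symbol $\sum_j c_j\,x_j\wedge(1-x_j)$ vanishes, conclude $F'=0$ away from finitely many points, extend by continuity, and evaluate at $u=1$. The paper instead exhibits the explicit array \eqref{eq:array} of ten five-term instances and reads off local constancy from \eqref{eq:abelvalue}. Your derivative formula for $L_{\mathbb{R}}$ is correct on each of $(-\infty,0)$, $(0,1)$, $(1,\infty)$. Your continuity argument and your corrected evaluation at $u=1$ coincide with Steps~3--4 of the paper.

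The gap is that the one step carrying all the content, the vanishing of the symbol, is asserted rather than checked, and the algebra you supply for it is wrong:
\begin{itemize}
\item $(u-2+r)(u-2-r)=(u-2)^{2}-r^{2}=4u(u-1)$, not $4(u-1)^{2}$.
\item $(u^{2}-2)^{2}-r^{2}=u^{2}(u^{2}-1)$, not $(u^{2}-1)(u^{2}+3)$.
\item Your generators are not multiplicatively independent even beyond the norm relations: $(u-2+r)(u+2-r)=u^{2}-(2-r)^{2}=4(u^{2}-2+r)$.
\end{itemize}
Expanding the symbol over that list as though it were a basis of $\QQ(u,r)^{\times}\otimes\QQ$ would not give a valid test.

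The repair is to pass to the paper's parameter $u=-4t/(3+t^{2})$. This maps $t\in(-\sqrt3,0)$ bijectively onto $(0,2/\sqrt3)$ and turns the five arguments, in order, into exactly $f_{1},f_{2},A,B,w$ of \eqref{eq:args2}. By \eqref{eq:atomtable}, every argument and its complement is, up to sign, a monomial in the genuinely independent elements $t,\ t-1,\ t+1,\ t-3,\ t+3,\ 2$. Expand
\[
A\wedge(1-A)+B\wedge(1-B)-3f_{1}\wedge(1-f_{1})-3f_{2}\wedge(1-f_{2})-w\wedge(1-w)
\]
over the fifteen basis wedges of these six elements. Every coefficient is zero; for example $(t+1)\wedge 2$ receives $+3$ from the $f_{1}$ term and $-3$ from the $w$ term. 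This is the computation underlying Remark \ref{rem:collapse}.

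With that inserted your proof is complete, and it is lighter than the paper's. A fifteen-coefficient check needs no certificate, whereas the array had to be found by a search. What the array buys in exchange is accessibility: it shows \eqref{eq:main} is a formal consequence of Rogers' five-term relation, which a symbol computation alone does not exhibit.
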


The use of $L_{\mathbb{R}}$ is essential: several of the arguments in \eqref{eq:main} are
negative or exceed $1$ for $u$ in the stated range, and with the principal branch of
\eqref{eq:rogers} the left-hand side is complex, with an imaginary part depending on $u$.
It is the real part that is constant.

Two features of \eqref{eq:main} are worth emphasising. The first is that it is accessible,
and that we can give the array explicitly. Under the substitution $u=-4t/(3+t^{2})$ of
Section \ref{sec:array} the arguments of \eqref{eq:main} become rational functions of a
single variable, and writing $\mathcal{T}$ for the left-hand side of \eqref{eq:main} in that
coordinate, ten instances of \eqref{eq:abel} suffice:
\begin{equation}\label{eq:array}
2\mathcal{T}\equiv\FT_{1}+\FT_{2}-\FT_{3}-\FT_{4}-3\FT_{5}-\FT_{6}-\FT_{7}
+\FT_{8}+\FT_{9}+\FT_{10}\pmod{\pi^{2}\QQ},
\end{equation}
where
\begin{equation}\label{eq:instances}
\begin{aligned}
\FT_{1}&=\FT\!\left(\tfrac{t+3}{t(t-1)},\ \tfrac{-(t-1)}{t+3}\right), &
\FT_{2}&=\FT\!\left(\tfrac{t+3}{t(t-1)},\ \tfrac{4t}{(t-1)(t+3)}\right),\\
\FT_{3}&=\FT\!\left(\tfrac{-(t-3)}{t(t+1)},\ \tfrac{-(t+1)}{t-3}\right), &
\FT_{4}&=\FT\!\left(\tfrac{-(t-3)}{t(t+1)},\ \tfrac{-4t}{(t-3)(t+1)}\right),\\
\FT_{5}&=\FT\!\left(\tfrac{t+3}{2t},\ \tfrac{4t}{(t-1)(t+3)}\right), &
\FT_{6}&=\FT\!\left(\tfrac{t+3}{2t},\ \tfrac{8t}{(t+1)(t+3)}\right),\\
\FT_{7}&=\FT\!\left(\tfrac{t+1}{2t},\ \tfrac{4t}{(t+1)^{2}}\right), &
\FT_{8}&=\FT\!\left(\tfrac{(t+1)(t+3)}{8t},\ \tfrac{2(t+1)}{t+3}\right),\\
\FT_{9}&=\FT\!\left(\tfrac{4}{(t-1)^{2}},\ \tfrac{-(t-1)}{2}\right), &
\FT_{10}&=\FT\!\left(\tfrac{-4}{(t-1)(t+3)},\ \tfrac{t+3}{2}\right).
\end{aligned}
\end{equation}

That the right-hand side collapses to $2\mathcal{T}$ is verified in Section \ref{sec:array}.
The difficulty here lies not in the number of instances but in the cancellation. Ten
instances contribute fifty arguments, and taken individually these give little sign of
belonging together: their denominators are of several distinct shapes, and no two of them
are visibly related. Reduced modulo the involutions $[x]=-[1/x]=-[1-x]$ they fall into
eighteen classes, thirteen of which cancel in pairs, leaving exactly the five arguments of
\eqref{eq:main} and no residual term to be absorbed into the constant. Nor is this the
collapse of a redundant set: no proper subset of the five surviving arguments satisfies a
relation of its own, so \eqref{eq:main} does not decompose into anything shorter (Remark
\ref{rem:collapse}).

The second feature is that the integrand from which \eqref{eq:main} arises is not one choice
among many. Among all integrands of the shape $t^{a}(1-t)^{b}(1+t)^{c}$ with $0\le a,b,c\le4$ and
$a+b+c\le8$, the sextic $t^{2}(1-t^{2})^{2}$ is the only member of its degree for which the
associated polynomial factors over a quadratic field, and no integrand of degree five, seven
or eight in that range does so at all.

A second theme concerns polylogarithm ladders, that is, relations
$\sum_{j}A_{j}L_{\mathbb{R}}(x^{j})\in\pi^{2}\QQ$ for algebraic $x\in(0,1)$, whose minimal polynomial is
called the base equation. Gordon and McIntosh \cite{GM} observe that producing ladders with
a base that is not of one of the trivial shapes is very difficult, and Zagier
\cite{Zagier07} likewise remarks that examples require considerable ingenuity. The classical
families whose base equation is totally real, that is, has all roots real, stop at degree
three: they are the trios associated with $\pi/7$, $\pi/9$ and $\pi/18$. The quartic ladder
of Gordon and McIntosh has a base equation with only two real roots. We prove a pair of totally real quartic-base ladders arising from the $\QQ(\sqrt{33})$
construction, and report four further conjectural quartic ladders found by integer relation
search, whose quartic fields contain $\QQ(\sqrt{10})$ and $\QQ(\sqrt5)$ as quadratic
subfields. All of these base equations have four real roots and they realise three distinct
Galois groups; they appear to be the first totally real ladders of degree exceeding three.

Sections \ref{sec:array}--\ref{sec:six} prove \eqref{eq:main} and deduce a six-term equation
by duplication; Section \ref{sec:integral} gives the integral origin and the classification;
Sections \ref{sec:spec}--\ref{sec:ladders} give the specialisations, the Clausen relations
and the ladders.

\section{Comparison with the literature}\label{sec:lit}

A recent illustration of what a successful outcome in this area looks like is due to Lima
\cite{Lima}, who derived from \eqref{eq:abel} the compact one-variable equation
\begin{equation}\label{eq:lima}
L(z)=L\!\left(\frac{1}{2-z}\right)+\tfrac12 L\bigl(2z-z^{2}\bigr)-\frac{\pi^{2}}{12},
\end{equation}
settling a question of Khoi on an identity arising from Seifert volumes; Campbell
\cite{CampbellLadders} then specialised \eqref{eq:lima} at algebraic points to obtain
families of ladders. The present paper is of this type, with a one-variable equation that
carries a square root and so reaches quadratic fields directly.

The closest published analogue of Theorem \ref{thm:main} is a one-variable equation obtained
independently by Kirillov \cite{Kirillov} and Lewin \cite{Lewin91}, namely
\begin{multline}\label{eq:kirillov}
L\!\left(\frac{-z^{7}(1-z)}{1+z}\right)=2L\bigl(z^{2}(1-z)\bigr)
+L\!\left(\frac{-z^{3}}{1-z^{2}}\right)+2L\!\left(\frac{z^{3}}{1+z}\right)
+L\bigl(-z(1-z^{2})\bigr)\\
+\tfrac74 L\bigl(z^{4}\bigr)-\tfrac94 L\bigl(z^{2}\bigr)
+\tfrac12 L\!\left(\frac{z(1-z)}{1+z}\right)-\tfrac12 L\!\left(\frac{-z(1+z)}{1-z}\right),
\end{multline}
an identity in nine terms holding exactly, with no $\pi^{2}$ contribution. Kirillov used
\eqref{eq:kirillov} together with further relations to prove the quadratic-base ladders of
Coxeter and Browkin in $\QQ(\sqrt5)$ and $\QQ(\sqrt{13})$; Lewin applied it to the base
equation $1-w=w^{5}$ and concluded that it otherwise led to nothing new.

The two equations are of comparable cost. Kirillov's proof of \eqref{eq:kirillov} uses eleven
instances of \eqref{eq:abel} to produce nine terms; the array \eqref{eq:array} uses ten to
produce five. Per term, therefore, \eqref{eq:kirillov} is if anything the more economical of
the two, and we make no claim to the contrary. What distinguishes Theorem \ref{thm:main} is
not the size of the array but the nature of what it produces. The arguments of
\eqref{eq:kirillov} are rational functions of $z$ with no evident symmetry, which is
precisely Lewin's complaint; specialising $z$ at an algebraic point gives arguments in the
field generated by that point, but the equation itself carries no conjugation symmetry to
exploit.
The arguments of \eqref{eq:main}, by contrast, occur in conjugate pairs over
$\QQ\bigl(u,\sqrt{4-3u^{2}}\bigr)$, and it is this that makes the specialisations of Section
\ref{sec:spec} and the ladder constructions of Section \ref{sec:ladders} possible at all: a
conjugate pair can be tuned onto $\pm1$ or onto the primitive cube roots of unity, and each
tuning removes the radical in a different way.

\section{Proof of Theorem \ref{thm:main}: how the fifty arguments cancel}\label{sec:array}

The conic $v^{2}=4-3u^{2}$ has the rational point $(u,v)=(0,2)$ and hence the rational
parametrisation
\begin{equation}\label{eq:args}
u=\frac{-4t}{3+t^{2}},\qquad r=\sqrt{4-3u^{2}}=\frac{6-2t^{2}}{3+t^{2}},
\end{equation}
under which the five arguments of \eqref{eq:main} become rational in $t$ and simplify to
\begin{equation}\label{eq:args2}
f_{1}=\frac{t+1}{2},\quad f_{2}=\frac{t+3}{2t},\quad
A=\frac{-t(t+1)}{t-3},\quad B=\frac{t+3}{t(t-1)},\quad
w=\frac{(t+1)(t+3)}{(t-3)(t-1)}.
\end{equation}
A direct computation gives the relation
\begin{equation}\label{eq:ABw}
AB=-w,
\end{equation}
which is the source of the symmetry properties used below. In this notation Theorem
\ref{thm:main} reads
\begin{equation}\label{eq:mainT}
\mathcal{T}:=[A]+[B]-3[f_{1}]-3[f_{2}]-[w].
\end{equation}

We remark that every zero and pole of the functions \eqref{eq:args2}, and of their
complements $1-f$, lies in the set $\{0,\pm1,\pm3,\infty\}$ together with the roots of
$t^{2}+3$. This is what makes the array below close up.

\begin{proof}[Proof of Theorem \ref{thm:main}]
\emph{Step 1: the array.} Each $\FT_{j}$ of \eqref{eq:instances} contributes five arguments,
fifty in all. Expanding these as rational functions of $t$ and reducing by the congruences
\eqref{eq:modrules}, they fall into the eighteen classes listed in Table \ref{tab:cancel} of
Appendix \ref{app:cancel}, which also gives a short script reproducing the calculation. There
the entry $j^{\pm c}$ in the third column records that $\FT_{j}$, weighted by its coefficient
in \eqref{eq:array}, contributes $\pm c$ copies of the class named in the first column; the
fourth column is the sum. Thirteen classes have total coefficient zero, and the five that
survive are exactly $A$, $B$, $f_{1}$, $f_{2}$ and $w$, with coefficients $2,2,-6,-6,-2$.
This proves \eqref{eq:array}.

\emph{Step 2: $\mathcal{T}$ is locally constant.} By \eqref{eq:abelvalue} each $\FT_{j}$ is,
as a function of $u$, either identically $0$ or identically $-\pi^{2}$ on each of the finitely
many open subintervals of $(0,2/\sqrt3)$ on which the signs of its arguments and of
$1-x_{j}y_{j}$ do not change; the same holds for each correction incurred in applying
\eqref{eq:LRrules} during Step~1, by the case division there. Hence $2\mathcal{T}$ is a
locally constant function of $u$.

\emph{Step 3: $\mathcal{T}$ is constant.} Step~2 gives constancy on each of the finitely many
open subintervals obtained by deleting the exceptional points, namely those $u$ at which some
$\FT_{j}$ is undefined or a sign changes. Now each of the five arguments in \eqref{eq:main} is
a continuous function of $u$ on all of $(0,2/\sqrt3)$, having poles only at $u=0$ and $u=-1$,
and $L_{\mathbb{R}}$ is continuous on all of $\mathbb{R}$ by \eqref{eq:LR}; hence
$\mathcal{T}$ is continuous on $(0,2/\sqrt3)$, including at the exceptional points. The
constants on adjacent subintervals therefore agree, and $\mathcal{T}$ is constant on the whole
interval.

\emph{Step 4: the constant.} Let $u\to1^{-}$. Then $r\to1$, and the five arguments tend
respectively to $0$, $-1$, $0$, $1$ and $0$. By continuity,
\[
\mathcal{T}=-3L_{\mathbb{R}}(-1)+L_{\mathbb{R}}(1)
=-3\left(-\frac{\pi^{2}}{12}\right)+\frac{\pi^{2}}{6}
=\frac{\pi^{2}}{4}+\frac{\pi^{2}}{6}=\frac{5\pi^{2}}{12},
\]
using $L_{\mathbb{R}}(0)=0$ and the special values recorded after \eqref{eq:LRrules}.
\end{proof}

The factorisations used below, which also supply the wedge vectors of Remark
\ref{rem:collapse}, are
\begin{equation}\label{eq:atomtable}
\begin{aligned}
A&=\frac{-t(t+1)}{t-3}, & 1-A&=\frac{(t-1)(t+3)}{t-3}, \\
B&=\frac{t+3}{t(t-1)}, & 1-B&=\frac{(t-3)(t+1)}{t(t-1)}, \\
f_{1}&=\frac{t+1}{2}, & 1-f_{1}&=\frac{-(t-1)}{2}, \\
f_{2}&=\frac{t+3}{2t}, & 1-f_{2}&=\frac{t-3}{2t}, \\
w&=\frac{(t+1)(t+3)}{(t-3)(t-1)}, & 1-w&=\frac{-8t}{(t-3)(t-1)} .
\end{aligned}
\end{equation}

\begin{remark}\label{rem:collapse}
Two further features of the collapse are worth recording. First, the five surviving
arguments are independent. Writing each $f$ and $1-f$ of \eqref{eq:atomtable} as a vector of
exponents over the multiplicative generators $t$, $t-1$, $t+1$, $t-3$, $t+3$, $t^{2}+3$, $2$
and $3$, and forming $f\wedge(1-f)$ in $\wedge^{2}$ of the resulting $\QQ$-vector space, a
direct symbolic computation gives a $15\times5$ matrix of rank four. The kernel is therefore
one-dimensional, and \eqref{eq:main} is the unique relation among these five arguments up to
scale rather than a sum of shorter ones. Second, the
coefficients of \eqref{eq:array} are integral only after doubling, which is
unavoidable: $-1$ is torsion in the group generated by the arguments, so the obstruction
attached to $\mathcal{T}$ is annihilated by $2$ but not by $1$, and proving $2\mathcal{T}$
proves $\mathcal{T}$; the correct general framework for coefficients in an arbitrary subgroup
of $\mathbb{C}$ is due to de Jeu \cite{deJeu}. Finally, let $S$ denote the set of rational functions of $t$ whose zeros and poles lie in
$\{0,\pm1,\pm3,\pm i\sqrt3,\infty\}$ and whose leading constant is a power of $2$ or $3$,
taken modulo the six-element group generated by $x\mapsto1/x$ and $x\mapsto1-x$. An
exhaustive search over the instances $\FT(x,y)$ with $x,y\in S$ and all five arguments again
in $S$, allowing arbitrary rational coefficients, shows that no combination of fewer than six
such instances, counted with multiplicity, produces $\mathcal{T}$; the array
attains this bound, having total multiplicity six before doubling.
\end{remark}

\section{The six-term equation}\label{sec:six}

Applying the duplication formula in the form $[w^{2}]=2[w]+2[-w]$ to \eqref{eq:mainT}, and
using the three-term consequence $[f_{1}]+[f_{2}]=[-A]+[-B]-[-w]$ of \eqref{eq:abel},
one obtains the following.

\begin{corollary}\label{cor:six}
For $0<u<2/\sqrt3$ and $r=\sqrt{4-3u^{2}}$,
\begin{multline}\label{eq:six}
L_{\mathbb{R}}\!\left(\frac{u^{2}+r-2}{-u(u+1)}\right)
-3L_{\mathbb{R}}\!\left(\frac{u^{2}+r-2}{u(u+1)}\right)
-4L_{\mathbb{R}}\!\left(\frac{1-u}{u+1}\right)\\
+L_{\mathbb{R}}\!\left(\frac{u^{2}-r-2}{-u(u+1)}\right)
-3L_{\mathbb{R}}\!\left(\frac{u^{2}-r-2}{u(u+1)}\right)
+\frac32 L_{\mathbb{R}}\!\left(\left(\frac{1-u}{u+1}\right)^{2}\right)
=\frac{5\pi^{2}}{12}.
\end{multline}
\end{corollary}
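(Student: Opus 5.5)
The plan is to derive \eqref{eq:six} directly from Theorem \ref{thm:main}, using one duplication and one instance of \eqref{eq:abel}. Both are exact identities in $L_{\mathbb{R}}$, not merely congruences, so no new constant has to be determined.

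\emph{Step 1: identify the arguments.} In the $t$-coordinate of \eqref{eq:args}, the two arguments $(u^{2}\pm r-2)/(-u(u+1))$ of \eqref{eq:six} are $A$ and $B$ of \eqref{eq:args2}, in some order. Their negatives $(u^{2}\pm r-2)/(u(u+1))$ are $-A$ and $-B$, and $(1-u)/(1+u)=w$. The left-hand side of \eqref{eq:six} therefore reads
\[
[A]+[B]-3[-A]-3[-B]-4[w]+\tfrac32[w^{2}].
\]

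\emph{Step 2: duplication.} By \eqref{eq:LRrules}, $[w^{2}]=2[w]+2[-w]$ exactly, with no constant. Hence $\tfrac32[w^{2}]-4[w]=-[w]+3[-w]$, and the left-hand side becomes
\[
[A]+[B]-[w]-3\bigl([-A]+[-B]-[-w]\bigr).
\]
Comparing with \eqref{eq:mainT}, it suffices to prove the exact three-term identity $[f_{1}]+[f_{2}]=[-A]+[-B]-[-w]$.

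\emph{Step 3: the three-term identity.} I take the instance $\FT(x,y)$ with $x=-A$ and $y=-B$. By \eqref{eq:ABw}, $xy=AB=-w$. From \eqref{eq:args2},
\[
1-xy=1+w=\frac{2(t^{2}+3)}{(t-3)(t-1)},\qquad 1-y=1+B=\frac{t^{2}+3}{t(t-1)}.
\]
So $x(1-y)/(1-xy)=(t+1)/2=f_{1}$, and the symmetric computation with $1+A=-(t^{2}+3)/(t-3)$ gives $y(1-x)/(1-xy)=(t+3)/(2t)=f_{2}$. Thus $\FT(-A,-B)=[-A]+[-B]-[-w]-[f_{1}]-[f_{2}]$, which is exactly the required combination.

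\emph{Step 4: the constant.} This is the step needing care, since \eqref{eq:abelvalue} could contribute $-\pi^{2}$. The exceptional case requires $xy>1$, i.e.\ $-w>1$. But for $0<u<2/\sqrt3$ we have $w=(1-u)/(1+u)>(1-2/\sqrt3)/(1+2/\sqrt3)>-1$, so $xy<1$ and $\FT(-A,-B)=0$ identically. I would also check that no argument hits a singular value: $1-xy=1+w>0$, and $A,B\ne0$ since $u^{2}-2\pm r\ne0$ on the open interval. By continuity of $L_{\mathbb{R}}$, isolated coincidences would be harmless anyway.

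The left-hand side of \eqref{eq:six} therefore equals $\mathcal{T}$ exactly, and Theorem \ref{thm:main} gives the value $5\pi^{2}/12$.
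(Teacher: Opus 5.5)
Your proof is correct and follows the paper's route: you apply duplication to $\mathcal{T}$ and use the three-term relation $[f_{1}]+[f_{2}]=[-A]+[-B]-[-w]$. You correctly realise that relation as the single instance $\FT(-A,-B)=0$, and you exclude the exceptional value $-\pi^{2}$ because $xy=-w<1$, which makes explicit what the paper leaves implicit.

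One small slip in Step 4: $u^{2}-2+r$ does vanish on the interval, at $u=1$ where $r=1$, so $A=0$ there. This is harmless, since $L_{\mathbb{R}}(0)=0$ is defined and $\FT(0,y)=0$ trivially, as your continuity remark already covers.
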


Equation \eqref{eq:six} has the reflection and symmetry properties
\[
\left(\frac{u^{2}-r-2}{u(u+1)}\right)^{-1}=\frac{u^{2}+r-2}{u(u-1)},
\]
which follow at once from \eqref{eq:ABw}; these make it the more convenient form for the
ladder constructions of Section \ref{sec:ladders}, at the cost of one extra term.

\section{An integral derivation, and forced choice of parameters}\label{sec:integral}  

\subsection{The pair of integrals}\label{sec:pair}

The array \eqref{eq:array} verifies Theorem \ref{thm:main} but does nothing to explain it,
and reconstructing such an array from scratch is not a realistic proposition: the eighteen
classes of Table \ref{tab:cancel} do not announce themselves, and a search through five-term
instances has no reason to visit them. The identity was found instead by a route that
produces the arguments together with the relation between them.

The idea is the following. If two definite integrals can be shown to be equal by a
manipulation that never mentions dilogarithms, and if each can separately be evaluated in
closed form in terms of dilogarithms, then equating the two evaluations gives a functional
equation for free. The arguments that appear are dictated by the factorisation of the
denominator, which is why they come in conjugate pairs and why the cancellation of Table
\ref{tab:cancel} happens at all.

Define
\begin{equation}\label{eq:w1w2}
w_{1}=\int_{0}^{1}\frac{(3x^{2}-1)\log x}{1+h^{2}g(x)^{2}}\,dx,
\qquad
w_{2}=\int_{0}^{1}\frac{(1-3x^{2})\log(1-x^{2})}{1+h^{2}g(x)^{2}}\,dx,
\qquad
g(x)=x(1-x^{2}).
\end{equation}
The equality of these two integrals is elementary.

\begin{lemma}\label{lem:w1w2}
Let $h\in\mathbb{C}$ be such that $1+h^{2}g(x)^{2}\neq0$ for all $x\in[0,1]$. Then $w_{1}$
and $w_{2}$ are defined and $w_{1}=w_{2}$.
\end{lemma}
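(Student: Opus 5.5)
Since $g'(x)=1-3x^{2}$, both integrands carry the factor $g'(x)$. The plan is to show that $w_{2}-w_{1}$ is the integral of an exact derivative $\frac{d}{dx}F(g(x))$, and that it vanishes because $g(0)=g(1)=0$. Explicitly,
\[
w_{1}=-\int_{0}^{1}\frac{g'(x)\log x}{1+h^{2}g(x)^{2}}\,dx,\qquad
w_{2}=\int_{0}^{1}\frac{g'(x)\log(1-x^{2})}{1+h^{2}g(x)^{2}}\,dx .
\]
On $(0,1)$ we have $\log x+\log(1-x^{2})=\log g(x)$ with all logarithms real, so
\[
w_{2}-w_{1}=\int_{0}^{1}\frac{g'(x)\log g(x)}{1+h^{2}g(x)^{2}}\,dx .
\]

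\emph{Existence of $w_{1}$ and $w_{2}$.} By hypothesis the denominator $1+h^{2}g(x)^{2}$ is continuous and nonvanishing on the compact interval $[0,1]$, so it is bounded away from $0$ there. Each integrand is therefore bounded by a constant times $|\log x|$ or $|\log(1-x^{2})|=|\log(1-x)|+|\log(1+x)|$. These singularities are logarithmic and integrable, at $x=0$ and $x=1$ respectively, so $w_{1}$ and $w_{2}$ converge absolutely.

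\emph{Defining the antiderivative.} The range of $g$ on $[0,1]$ is $I=[0,2/(3\sqrt3)]$, with the maximum attained at $x=1/\sqrt3$. Since every $s\in I$ equals $g(x)$ for some $x\in[0,1]$, the hypothesis gives $1+h^{2}s^{2}\neq0$ on all of $I$. So we may define
\[
F(s)=\int_{0}^{s}\frac{\log\sigma}{1+h^{2}\sigma^{2}}\,d\sigma,\qquad s\in I .
\]
This is finite and continuous on $I$, because $\log\sigma$ is integrable at $0$. It is differentiable on the interior of $I$ with $F'(s)=\log s/(1+h^{2}s^{2})$.

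\emph{Evaluating the integral.} For $0<x<1$ we have $g(x)\in(0,2/(3\sqrt3)]$. At the interior point $g(1/\sqrt3)$ the chain rule still applies: $F$ is differentiable wherever $s>0$, using the same formula for $F'$. Hence $\frac{d}{dx}F(g(x))=g'(x)\log g(x)/(1+h^{2}g(x)^{2})$ throughout $(0,1)$. Note that no monotonicity of $g$ is needed, since we are composing functions rather than changing variables. For $0<\varepsilon<\delta<1$ this gives
\[
\int_{\varepsilon}^{\delta}\frac{g'(x)\log g(x)}{1+h^{2}g(x)^{2}}\,dx=F(g(\delta))-F(g(\varepsilon)).
\]
Let $\varepsilon\to0^{+}$ and $\delta\to1^{-}$. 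The left side tends to $w_{2}-w_{1}$ because the integral converges absolutely. The right side tends to $F(g(1))-F(g(0))=F(0)-F(0)=0$ by continuity of $F$ and $g$. Hence $w_{1}=w_{2}$.

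The only delicate point is the endpoint behaviour, where the logarithms blow up and $F'$ is singular at $s=0$. It is handled by working on $[\varepsilon,\delta]$ and using the continuity of $F$ at $0$. Everything else is the chain rule.
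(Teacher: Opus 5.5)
Your proof is correct and follows the paper's argument: rewrite $w_{2}-w_{1}$ as $\int_{0}^{1} g'(x)\log g(x)\,(1+h^{2}g(x)^{2})^{-1}\,dx$, recognise the integrand as $\frac{d}{dx}F(g(x))$ with $F(s)=\int_{0}^{s}\log\sigma\,(1+h^{2}\sigma^{2})^{-1}\,d\sigma$, and conclude from $g(0)=g(1)=0$. Your $[\varepsilon,\delta]$ limit treats the endpoints more carefully than the paper, which loosely calls $F_{h}$ differentiable at $0$; the only slip is that $|\log(1-x^{2})|\le|\log(1-x)|+|\log(1+x)|$ is an inequality rather than an equality, and the inequality is all your bound needs.
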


\begin{proof}
On $(0,1)$ we have $g'(x)=1-3x^{2}$ and $\log g(x)=\log x+\log(1-x^{2})$, so
\[
w_{1}-w_{2}
=-\int_{0}^{1}\frac{g'(x)\bigl(\log x+\log(1-x^{2})\bigr)}{1+h^{2}g(x)^{2}}\,dx
=-\int_{0}^{1}\frac{g'(x)\log g(x)}{1+h^{2}g(x)^{2}}\,dx .
\]
Put $F_{h}(y)=\int_{0}^{y}\log s\,(1+h^{2}s^{2})^{-1}ds$, which is defined and
differentiable for $0\le y\le\max_{[0,1]}g$ under the stated hypothesis. By the chain rule
the last integrand is $\tfrac{d}{dx}F_{h}(g(x))$, whence
\[
w_{1}-w_{2}=-F_{h}\bigl(g(1)\bigr)+F_{h}\bigl(g(0)\bigr)=0,
\]
since $g(0)=g(1)=0$.
\end{proof}

\begin{remark}
The identity $w_{1}=w_{2}$ was originally found by expanding the denominator, evaluating the
resulting beta integrals, and observing that the two hypergeometric series so obtained differ
only by a shift of index. That route is what led to the factorisation \eqref{eq:factor} and
hence to Theorem \ref{thm:main}, but it is not needed: Lemma \ref{lem:w1w2} proves the
equality directly, and we use only the elementary form.
\end{remark}

\subsection{The nonsingular domain}\label{sec:omega}

The function $g(x)=x(1-x^{2})$ attains its maximum on $[0,1]$ at $x=1/\sqrt3$, with
\[
g_{\max}=\frac{2}{3\sqrt3}.
\]
Hence $1+h^{2}g(x)^{2}$ vanishes for some $x\in(0,1)$ exactly when $h=\pm i/g(x)$ for some
such $x$, that is exactly when $h$ lies on one of the two imaginary rays
$\pm i\,[\,3\sqrt3/2,\infty)$. Writing
\begin{equation}\label{eq:Omega}
\Omega=\mathbb{C}\setminus\Bigl(i\bigl[\tfrac{3\sqrt3}{2},\infty\bigr)
\cup-i\bigl[\tfrac{3\sqrt3}{2},\infty\bigr)\Bigr),
\end{equation}
Lemma \ref{lem:w1w2} applies for every $h\in\Omega$, and $w_{1}$, $w_{2}$ are analytic on
$\Omega$.

Set $h=i/\bigl(u(1-u^{2})\bigr)$. For \emph{real} $u$ this $h$ is purely imaginary, and the
condition $h\in\Omega$ then reads
\begin{equation}\label{eq:nonsing}
\bigl|u(1-u^{2})\bigr|>\frac{2}{3\sqrt3}=0.38490\ldots
\end{equation}
For complex $u$ the criterion is simply $h\in\Omega$, and \eqref{eq:nonsing} is sufficient but
not necessary; on the imaginary axis $u=ik$, for instance, $h$ is real and so lies in $\Omega$
irrespective of \eqref{eq:nonsing}. With this understood,
the denominator of \eqref{eq:w1w2} becomes proportional to
\begin{equation}\label{eq:factor}
u^{2}(1-u^{2})^{2}-x^{2}(1-x^{2})^{2}
=(u^{2}+ux+x^{2}-1)(u^{2}-ux+x^{2}-1)(u^{2}-x^{2}).
\end{equation}
Partial fractions applied to this factorisation, together with the elementary evaluation of
the resulting integrals, produce \eqref{eq:six}; this is carried out in Section
\ref{sec:recomb}.

Two consequences will be used later. First, at the real parameters of the
$\QQ(\sqrt{33})$ ladders of Section \ref{sec:ladders}, which satisfy $3u^{4}-15u^{2}+16=0$
and hence $u^{2}=(15\pm\sqrt{33})/6$, one has $u=1.24200\ldots$ or $u=1.85941\ldots$ and
correspondingly $|u(1-u^{2})|=0.67387\ldots$ or $4.56937\ldots$, both comfortably above
$g_{\max}$. Second, at the Clausen parameters $u=ik$ of Section \ref{sec:clausen} one has
$h=1/\bigl(k(1+k^{2})\bigr)$, which is real, so $1+h^{2}g(x)^{2}>0$ throughout $[0,1]$ and
$h\in\Omega$ automatically. In
both cases the integrals are nonsingular and Lemma \ref{lem:w1w2} applies directly; no
continuation of $w_{1}=w_{2}$ through a singular integral is required.

\subsection{Recombination of the blocks}\label{sec:recomb}

Evaluating \eqref{eq:w1w2} by partial fractions after the substitution
$h=i/\bigl(u(u^{2}-1)\bigr)$, using the factorisation \eqref{eq:factor}, produces an exact
identity
\begin{equation}\label{eq:blockid}
A+B+J+C+H+D=K
\end{equation}
among the seven blocks displayed below. Two determinations of the radical occur and must be
kept distinct: $A$ and $K$ are expressed through $r=\sqrt{4-3u^{2}}$, while $B$, $J$ and $C$
are expressed through $i\sqrt{3u^{2}-4}$. For $4-3u^{2}>0$ the two differ by a sign, and it
is exactly this that allows \eqref{eq:blockid} to persist for $|u|>2/\sqrt3$, where
$r$ becomes imaginary. Throughout, $\log$ is principal, and \eqref{eq:blockid} is asserted
for $u\in\{z\in\mathbb{C}:\Im z>0,\ \Re z\ge0\}$; values on the positive real axis are
obtained as limits from above. It fails in the lower half-plane.

\begin{equation}\label{eq:blocks}
\begin{aligned}
A&=\Li_2\!\left(\frac{r+u^{2}-2}{u(u+1)}\right)-\Li_2\!\left(\frac{r+u^{2}-2}{(u-1)u}\right),
\\[4pt]
B&=\tfrac12\log^{2}\!\left(\frac{-i\sqrt{3u^{2}-4}+u+2}{4}\right)
-\tfrac12\log^{2}\!\left(\frac{-i\sqrt{3u^{2}-4}-u^{2}+2}{u(u+1)}\right)\\
&\qquad+\tfrac12\log^{2}\!\left(\frac{i\sqrt{3u^{2}-4}+u+2}{4}\right)-\frac{\pi^{2}}{6},
\\[4pt]
J&=\tfrac14\log^{2}\!\left(\frac{-4}{-i\sqrt{3u^{2}-4}+u+2}\right)
+\tfrac14\log^{2}\!\left(\frac{-4}{i\sqrt{3u^{2}-4}+u+2}\right)\\
&\qquad-\tfrac14\log^{2}\!\left(\frac{-4}{-i\sqrt{3u^{2}-4}-u+2}\right)
-\tfrac14\log^{2}\!\left(\frac{-4}{i\sqrt{3u^{2}-4}-u+2}\right),
\\[4pt]
C&=-\tfrac12\log\!\left(\frac{-i\sqrt{3u^{2}-4}+u+2}{4}\right)
\log\!\left(\frac{i\sqrt{3u^{2}-4}-u+2}{4}\right)\\
&\qquad-\tfrac12\log\!\left(\frac{i\sqrt{3u^{2}-4}+u+2}{4}\right)
\log\!\left(\frac{-i\sqrt{3u^{2}-4}-u+2}{4}\right)+\frac{\pi^{2}}{6},
\\[4pt]
H&=\tfrac14\log^{2}\!\left(\frac{2}{u-1}\right)-\tfrac14\log^{2}\!\left(\frac{-2}{u+1}\right)
+\tfrac12\log\!\left(\frac{1-u}{2}\right)\log\!\left(\frac{u+1}{2}\right)\\
&\qquad-\frac{\pi i}{2}\log\!\left(\frac{u+1}{u-1}\right)
-\log 2\,\log\!\left(\frac{u+1}{u-1}\right)-\frac{\pi^{2}}{12},
\\[4pt]
D&=\Li_2\!\left(\frac{1-u}{2}\right)+\log 2\,\log\!\left(\frac{u-1}{u+1}\right)
+\left(2\log 2+\frac{\pi i}{2}\right)\log\!\left(\frac{u+1}{u-1}\right),
\\[4pt]
K&=\tfrac12\Li_2\!\left(\frac{2}{-r+u}\right)-\tfrac12\Li_2\!\left(\frac{2}{r-u}\right)
-\tfrac12\Li_2\!\left(\frac{-2}{r+u}\right)\\
&\qquad+\tfrac12\Li_2\!\left(\frac{2}{r+u}\right)
+\tfrac12\Li_2\!\left(\frac{-1}{u}\right)-\tfrac12\Li_2\!\left(\frac{1}{u}\right).
\end{aligned}
\end{equation}

We also record the two-term combination
\begin{equation}\label{eq:Mblock}
M:=A+B=-\Li_2\!\left(\frac{r-u+2}{4}\right)-\Li_2\!\left(\frac{-r-u+2}{4}\right),
\end{equation}
which is the form in which $A+B$ is used below.

\begin{proposition}\label{prop:blockid}
Identity \eqref{eq:blockid} holds exactly for
$u\in\{z\in\mathbb{C}:\Im z>0,\ \Re z\ge0\}$, subject to the determinations stated above.
\end{proposition}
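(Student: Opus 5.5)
We argue by analytic continuation: prove \eqref{eq:blockid} on a small open set of the quadrant $Q=\{\Im u>0,\ \Re u\ge0\}$ where every logarithm and dilogarithm is safely on its principal sheet, and then extend. Let $\Phi(u)=A+B+J+C+H+D-K$. With $i\sqrt{3u^{2}-4}$ and $r=\sqrt{4-3u^{2}}$ each fixed as principal branches, $\Phi$ is a finite sum of compositions of $\Li_2$ and $\log$ with algebraic functions of $u$. It is holomorphic on $Q$ away from the finite set of points where an argument crosses a branch cut: a point of $[1,\infty)$ for $\Li_2$, or of $(-\infty,0]$ for $\log$. The open quadrant is connected, so it suffices to show two things. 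First, $\Phi\equiv0$ on some open subset. Second, $\Phi$ has no jump across any cut curve meeting the quadrant. The boundary ray $\Re u=0$ and the real axis are then handled by continuity, and by limits from above as stated.

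\emph{Step 1: derivative check.} We compute $d\Phi/du$ symbolically, using $\frac{d}{du}\Li_2(f)=-\log(1-f)\,f'/f$ and $\frac{d}{du}\tfrac12\log^2 f=\log f\cdot f'/f$. The factorisations \eqref{eq:atomtable} and \eqref{eq:factor} show that all logarithms occurring in the derivative are logs of products of the atoms $u$, $u\pm1$, $u\pm2$, $2\pm u\pm r$, $4$, with $r^{2}=4-3u^{2}$. Rationalising via the conic parametrisation \eqref{eq:args}, $u=-4t/(3+t^{2})$, makes every argument rational in $t$ with zeros and poles in $\{0,\pm1,\pm3,\pm i\sqrt3,\infty\}$. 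We write each $\log$ as a sum of logs of these linear factors plus an explicit multiple of $2\pi i$, the latter read off from the principal-branch arguments on a test region, for instance $u=\varepsilon+iY$ with $Y$ large. The coefficient of each independent $\log(t-c)$ then must vanish, which is a finite rational-function identity. Any remaining constant multiples of $\pi i$ times $d\log(\cdot)$ should cancel against the explicit $\pm\pi i/2$ terms in $H$ and $D$; these terms are clearly there to absorb exactly this.

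\emph{Step 2: the constant.} Once $\Phi'\equiv0$ on the test region, we evaluate $\Phi$ at a convenient point. The limit $u\to i\infty$ along the imaginary axis works well: $r\sim -i\sqrt3\,u$ direction-wise, arguments such as $2/(r\pm u)$ and $\pm1/u$ tend to $0$, and $(2\pm u\pm r)/4$ approach fixed points of modulus growing like $|u|$. Alternatively $u\to 1$ may be cleaner, since many arguments tend to $0$, $\pm1$ or $\tfrac12$ and the values come from \eqref{eq:LRrules}. Asymptotic bookkeeping of the $\log^{2}$ terms should then yield $\Phi=0$; the explicit $\pm\pi^{2}/6$ and $-\pi^{2}/12$ constants suggest the authors normalised at such a point.

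\emph{Step 3: no jumps (main obstacle).} Here we list, for each argument, the curves in $Q$ where it meets its cut. Across such a curve, $\Li_2(f)$ jumps by $\pm2\pi i\log f$ and $\log^{2} f$ jumps by $\pm4\pi i\log f-4\pi^{2}$. We must show these jumps cancel in total. The mechanism is the conjugate pairing: the two determinations $r$ and $i\sqrt{3u^{2}-4}$ coincide up to sign exactly where one might flip, and $M=A+B$ in \eqref{eq:Mblock} is the combination whose cuts are harmless. I would first verify \eqref{eq:Mblock} itself by the same derivative check, which reduces $A+B$ to two $\Li_2$ terms and simplifies the jump analysis. After that, only the curves on which $(\pm r-u+2)/4\in[1,\infty)$ and those from $J$, $C$, $H$, $D$, $K$ need checking. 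If some jump failed to cancel, the statement that the identity fails in the lower half-plane suggests the bad curves lie exactly on $\Im u=0$. I therefore expect, and would aim to show, that no cut curve enters the open quadrant except along the boundary. As a fallback, I would confirm this numerically at sample points on either side of each candidate curve before writing out the branch argument.
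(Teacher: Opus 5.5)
Your plan (show $\Phi'\equiv0$, fix one value, exclude jumps) is a direct verification. It is essentially the alternative mentioned in the remark after the paper's proof, not the paper's argument. The paper derives \eqref{eq:blockid} from $w_1=w_2$ (Lemma~\ref{lem:w1w2}): it identifies $-w_1/q$ with $K$ via the antiderivative \eqref{eq:antider}, and $-w_2/q$ with $A+B+J+C+H+D$ via partial fractions over \eqref{eq:factor}. A verification route is legitimate, but none of your three steps is carried out, and Step~3, which you rightly call the crux, aims at something false. On the quadrant $\Im(3u^{2})=6\,\Re u\,\Im u\ge0$, so $i\sqrt{3u^{2}-4}=-r$ identically; you need this anyway before rationalising in Step~1. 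Along the hyperbola $(\Im u)^{2}=3(\Re u)^{2}-1$ with $\Re u>1/\sqrt3$, the sum $u+r$ is real and exceeds $4/\sqrt3$. For example, at $u=1+i\sqrt2$ one has $r=3-i\sqrt2$ and $u+r=4$, hence $-4/(-i\sqrt{3u^{2}-4}+u+2)=-2/3$ and $(i\sqrt{3u^{2}-4}-u+2)/4=-1/2$. So the first $\log^{2}$ of $J$ and the first product in $C$ lie on the cut of $\log$ along a curve through the interior of the quadrant. The identity survives there only because, across that curve, $J$ jumps by $\pm\pi i\log\bigl(4/(u+r+2)\bigr)$ and $C$ by exactly the opposite amount. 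Your stated aim, that no cut curve enters the open quadrant, therefore cannot be achieved. Numerical sampling on either side can guide, but cannot replace, computing and cancelling the jumps on every cut curve. That hyperbola is precisely where $q=u(1-u^{2})$ is real and exceeds $g_{\max}$, i.e.\ where the roots $\pm(u+r)/2$ of the quadratic factors in \eqref{eq:factor} become real. In the paper's route both sides are normalised integrals analytic in $u$ on the whole open upper half-plane (there $h=i/q\in\Omega$), which is why such jumps must cancel; in yours that cancellation has to be established by hand, curve by curve.

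Steps~1 and~2 also have concrete problems. In Step~1 the $\pm\frac{\pi i}{2}$ terms cannot absorb anything. On the upper half-plane $\log\frac{u-1}{u+1}=-\log\frac{u+1}{u-1}$, so every term of $H$ and $D$ involving $\log\frac{u+1}{u-1}$, including the $\pm\frac{\pi i}{2}$ and $\log2$ ones, cancels identically in $H+D$. Any residual $\pi i\,d\log$ terms must therefore vanish among the other blocks. In Step~2, \eqref{eq:LRrules} are statements about $L_{\mathbb{R}}$, a real part, and cannot evaluate the complex principal-branch quantity $\Phi$. Moreover, at $u=1$ (where $q=0$) individual blocks diverge: $\tfrac14\log^{2}\bigl(2/(u-1)\bigr)$ in $H$, the $\log^{2}$ of $-4/(2-u-r)$ in $J$, and $\Li_2\bigl(2/(u-r)\bigr)$ in $K$. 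The constant must then come from cancelling $\log^{2}$ and $\pi i\log$ divergences, not from substituting special values. The limit $u\to i\infty$ likewise needs the inversion formula for $\Li_2$ at large arguments such as $(2-u\pm r)/4$. Its branch-dependent $\log^{2}$ and $\pi^{2}$ terms are exactly what ``subject to the determinations stated above'' refers to. Finally, branch crossings occur along curves, not at a finite set of points as you state at the outset.
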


\begin{proof}
Put
\[
q=u\bigl(1-u^{2}\bigr),\qquad h=\frac{i}{q},
\]
so that Lemma \ref{lem:w1w2} gives $w_{1}=w_{2}$. We multiply this equality by $-1/q$ and
evaluate the two normalised integrals by partial fractions, using the factorisation
\eqref{eq:factor} of $q^{2}-x^{2}(1-x^{2})^{2}$. The blocks $A,\dots,K$ represent the two
integrals after this common normalisation.

For the normalised $w_{1}$ integral, the function
\begin{equation}\label{eq:antider}
\begin{aligned}
F(x)=\frac{1}{2u-2u^{3}}\Biggl[
&\Li_2\!\left(\frac{-2x}{u-r}\right)-\Li_2\!\left(\frac{2x}{u-r}\right)
+\Li_2\!\left(\frac{-2x}{u+r}\right)-\Li_2\!\left(\frac{2x}{u+r}\right)\\
&-\Li_2\!\left(\frac{-x}{u}\right)+\Li_2\!\left(\frac{x}{u}\right)
+\log x\,\log\!\left(\frac{r-u-2x}{r-u}\right)\\
&-\log x\,\log\!\left(\frac{r+u-2x}{r+u}\right)
-\log x\,\log\!\left(\frac{r-u+2x}{r-u}\right)\\
&+\log x\,\log\!\left(\frac{r+u+2x}{r+u}\right)
+\log x\,\log\!\left(\frac{u-x}{u}\right)-\log x\,\log\!\left(\frac{u+x}{u}\right)
\Biggr],
\end{aligned}
\end{equation}
with $r=\sqrt{4-3u^{2}}$, satisfies
\[
-q\,F'(x)=-\frac1q\cdot\frac{(3x^{2}-1)\log x}{1+h^{2}g(x)^{2}} ,
\]
so that
\[
-\frac{w_{1}}{q}=-q\bigl[F(1)-F(0)\bigr].
\]
Evaluating the endpoint terms with the determinations stated above gives
\[
-\frac{w_{1}}{q}=K .
\]
The corresponding partial-fraction evaluation of the normalised $w_{2}$ integral gives
\[
-\frac{w_{2}}{q}=A+B+J+C+H+D .
\]
Since $w_{1}=w_{2}$, identity \eqref{eq:blockid} follows. The determinations are those
inherited from the $x$-integration in the upper half-plane, and it is these that make the
$\log^{2}$ terms of $B$, $J$, $C$ come out in $i\sqrt{3u^{2}-4}$ rather than $r$.
\end{proof}

\begin{remark}
Writing $R(u)=A+B+J+C+H+D-K$, one may alternatively verify \eqref{eq:blockid} by
differentiation: $R$ is analytic on the stated domain, $R'$ is a $\QQ(u)$-linear combination
of logarithms of the algebraic functions occurring in \eqref{eq:blocks}, and collecting
coefficients gives $R'\equiv0$; a single evaluation then fixes $R\equiv0$. Numerically
$|R|<10^{-40}$ and $|R'|<10^{-49}$ at $u=0.4+0.2i$, $1.0+0.3i$, $1.9+0.15i$, $0.2+0.9i$ and
$2.5+0.05i$ at $40$-digit precision.
\end{remark}

\begin{remark}\label{rem:blockcheck}
As confirmation, identity \eqref{eq:blockid} has been verified numerically to $30$ digits at
$u=0.3+0.05i$, $0.7+0.05i$, $1.0+0.05i$, $1.2420+0.05i$, $1.8594+0.05i$ and $2.5+0.05i$,
and along the imaginary axis at $u=i$, $i\sqrt3$ and $i/\sqrt3$. The last two rows matter for
the applications: the first pair of these real parts are the $\QQ(\sqrt{33})$ ladder
parameters of Section \ref{sec:ladders}, which satisfy $u>2/\sqrt3$ and so lie outside the
hypothesis of Theorem \ref{thm:main}, while the imaginary values are the Clausen parameters
of Section \ref{sec:clausen}. At $u=0.3-0.05i$ the residual is nonzero, confirming that the
half-plane hypothesis is essential rather than formal.
\end{remark}

Throughout we use the four-term relation
\begin{equation}\label{eq:fourterm}
\Li_2(x)-\Li_2(-x)+\Li_2\!\left(\frac{1-x}{1+x}\right)-\Li_2\!\left(\frac{x-1}{1+x}\right)
=\frac{\pi^{2}}{4}+\log x\,\log\!\left(\frac{1+x}{1-x}\right),
\end{equation}
in the region where all four arguments avoid $[1,\infty)$, which holds for the $X_{j}$ below
throughout the half-plane just specified.

We indicate how \eqref{eq:six} is recovered from \eqref{eq:blockid} for real $u$ in the range
of Theorem \ref{thm:main}. Work modulo $\pi^{2}\QQ$ and $\log^{2}$ terms,
so that $[1/x]=-[x]$, $[1-x]=-[x]$ and hence $[x/(x-1)]=-[x]$. Write
$p_{\pm}=(u^{2}\pm r-2)/(-u(u+1))$ and $q_{\pm}=-p_{\pm}$.

\smallskip\noindent
\emph{Step 1.} The six terms of $K$ are three antisymmetric pairs at
$X_{1}=2/(u-r)$, $X_{2}=-2/(u+r)$ and $X_{3}=-1/u$, so
$K=\tfrac12\bigl[(X_{1})-(X_{2})+(X_{3})\bigr]$ in the notation of \eqref{eq:fourterm}.

\smallskip\noindent
\emph{Step 2.} Applying \eqref{eq:fourterm} to each pair, the images
$(1-X)/(1+X)$ and $(X-1)/(1+X)$ are, respectively, $q_{-}$ and $p_{-}$ for $X_{1}$;
$1/q_{+}$ and $1/p_{+}$ for $X_{2}$; and $-1/w$ and $1/w$ for $X_{3}$, where
$w=(1-u)/(1+u)$. Hence
\[
K\equiv\tfrac12\bigl([p_{+}]+[p_{-}]-[q_{+}]-[q_{-}]+[-w]-[w]\bigr).
\]

\smallskip\noindent
\emph{Step 3.} The arguments of $M$ in \eqref{eq:Mblock} satisfy
$(2-u\mp r)/4=q_{\mp}/(q_{\mp}-1)$, so $M\equiv[q_{+}]+[q_{-}]$. The blocks $J$, $C$, $H$ contribute nothing. Finally the argument
of $D$ satisfies
\[
\frac{1-u}{2}=\frac{-w}{(-w)-1},\qquad\text{whence}\qquad D\equiv-[-w].
\]

\smallskip\noindent
\emph{Step 4.} Equating and multiplying by $-2$,
\[
[p_{+}]+[p_{-}]-3[q_{+}]-3[q_{-}]+3[-w]-[w]=0 .
\]

\smallskip\noindent
\emph{Step 5.} The duplication formula $[w^{2}]=2[w]+2[-w]$ replaces $3[-w]-[w]$ by
$\tfrac32[w^{2}]-4[w]$, giving \eqref{eq:six}.

\smallskip
The identification in Step 3 of the argument of $D$ with $-w/(-w-1)$ is the one step that is
not immediate; it is what causes the array to close. Tracking the $\log^{2}$ and $\pi^{2}$
contributions of $J$, $C$ and $H$ through Steps 1--5 recovers the explicit constant
$\tfrac{5}{12}\pi^{2}$ of Theorem \ref{thm:main}.

Steps 1--5 discard the $\log^{2}$ terms, and it is this that confines \eqref{eq:six} to the
range $0<u<2/\sqrt3$. The full identity \eqref{eq:blockid} retains them and, by Remark
\ref{rem:blockcheck}, remains valid for $|u|>2/\sqrt3$ and on the imaginary axis. The
specialisations of Sections \ref{sec:clausen} and \ref{sec:ladders} are therefore taken from
\eqref{eq:blockid}, not from \eqref{eq:six}.

\subsection{Why this integrand: a classification of the admissible exponents}

The factorisation \eqref{eq:factor} is what makes the method work, and it is not typical.
Write $P(t)=t^{a}(1-t)^{b}(1+t)^{c}$ with $0\le a,b,c\le4$, and consider the splitting of
$P(u)-P(x)$ over
$\QQ(u)[x]$; a quadratic field appears exactly when every irreducible factor has degree at
most two, with at least one of degree exactly two.

\begin{proposition}\label{prop:class}
Among the $101$ triples $(a,b,c)$ of integers with $0\le a,b,c\le4$ and
$2\le a+b+c\le 8$, exactly $18$ have the property that
$P(u)-P(x)$ splits into factors of degree at most two over $\QQ(u)$. Of these, ten have
$\deg P=3$, seven have $\deg P=4$, one has $\deg P=6$, and none has $\deg P=5$, $7$ or $8$.
The unique survivor of degree six is $P(t)=t^{2}(1-t^{2})^{2}$: here the quadratic factors are
$x^{2}\pm ux+u^{2}-1$, of discriminant $4-3u^{2}$.
\end{proposition}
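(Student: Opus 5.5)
The claim is a finite classification, so the proof I would give is a verified enumeration organised so that most cases are settled by hand and only a small residue needs machine factorisation. First I would reduce the count. The triple $(a,b,c)$ and $(a,c,b)$ give polynomials related by $t\mapsto -t$, namely $P_{a,c,b}(t)=\pm P_{a,b,c}(-t)$. Since $P(u)-P(x)\mapsto \pm\bigl(P(-u)-P(-x)\bigr)$, the factorisation pattern over $\QQ(u)$ is preserved, so it suffices to treat $b\le c$ and double the off-diagonal cases. The number $101$ is simply the count of triples in the box with $2\le a+b+c\le 8$, and I would check it by inclusion--exclusion.

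Next I would apply a structural criterion. The polynomial $P(u)-P(x)$ always has the factor $x-u$, and the quotient $\bigl(P(u)-P(x)\bigr)/(u-x)$ has degree $n-1$ in $x$, where $n=a+b+c$. We need every irreducible factor over $\QQ(u)$ to have degree at most two. By Lüroth/Ritt theory, a nontrivial factorisation of $P(u)-P(x)$ over $\QQ(u)$ corresponds to an intermediate field between $\QQ(P(u))$ and $\QQ(u)$, which means decompositions $P=G\circ H$. The quotient is also visible through the monodromy (Galois) group of $P(t)=s$ acting on the roots.

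The condition that all factors have degree at most two forces the point stabiliser in this group to have orbits of size at most two. For $n\ge5$ this essentially requires $P$ to be a composition of degree-$2$ maps, or to be equivalent to a Chebyshev- or Dickson-type polynomial. For $t^2(1-t^2)^2=G(t^2)$ with $G(s)=s(1-s)^2$, an analogous cubic structure appears: the factors are $x^2-u^2$ together with the two conjugate quadratics, and this is exactly \eqref{eq:factor}. The explicit factorisation in the statement follows by direct expansion, and the discriminant of $x^2\pm ux+u^2-1$ is $u^2-4(u^2-1)=4-3u^2$.

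In practice, the rigorous step would be to factor $P(u)-P(x)$ in $\QQ[u,x]$ for each of the (at most about $60$) representatives, using any CAS. Absolute irreducibility is not needed: factorisation over $\QQ(u)$ equals factorisation in $\QQ[u,x]$ by Gauss's lemma. I would tabulate the maximum factor degree for each case and read off the counts $10,7,1$ and $0,0,0$.

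For degree $3$, $P(u)-P(x)=(x-u)Q$ with $Q$ quadratic, so every one of the ten degree-$3$ triples qualifies automatically. The counting check is that there are ten triples with $a+b+c=3$ in the box. For degree $2$, no factor of degree exactly two appears, since $P(u)-P(x)$ splits into linear factors; this is why degree $2$ is excluded from the count.

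For degree $4$, seven of the $15$ triples qualify. I would confirm this via the criterion that the cubic quotient $Q$ has a linear factor over $\QQ(u)$, which holds iff $P$ has a symmetry $P(\sigma t)=P(t)$ for an affine involution $\sigma$. Equivalently, $P=G\circ H$ with $H$ quadratic, which I would check by hand.

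The main obstacle is degrees $5$ through $8$, where a hand argument needs monodromy classification. I would rely on explicit factorisation for these cases, and optionally cross-check by specialising $u$ to a random integer and factoring over $\QQ$. A factor of degree at least three at a generic specialisation certifies failure via Hilbert irreducibility in the easy direction, because degrees can only drop under specialisation.
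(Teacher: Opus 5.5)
Your proposal is correct and follows essentially the paper's own route. The paper offers Proposition~\ref{prop:class} only as a finite verification over the stated range, and your plan is that verification: factor the $59$ representatives with $b\le c$ in $\QQ[u,x]$, read off degrees in $x$, and use the intended condition (at least one factor of degree exactly two) to discard $\deg P=2$. You supplement it with a correct affine-symmetry criterion in degree four and a valid specialisation certificate for the failing cases.

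Two small cautions, neither of which affects the computation. First, your L\"uroth/Ritt sentence is wrong as stated: irreducible factors of $P(u)-P(x)$ over $\QQ(u)$ correspond to orbits of a point stabiliser, not to intermediate fields. The indecomposable $t^{3}$ already has a quadratic factor. Second, in degree three ``qualifies automatically'' needs the quadratic quotient to be irreducible. This holds, because a linear factor would give an affine $\varphi\neq\mathrm{id}$ with $P\circ\varphi=P$, which is impossible in odd degree over $\QQ$.
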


Moreover the two lower-degree bands are degenerate or already accounted for:

\begin{itemize}
\item In every surviving case with $\deg P=4$, the coefficient of $x$ in each quadratic
factor is constant in $u$. Hence the two conjugate roots sum to a constant, the conjugate
pair has the form $\{x,\,c-x\}$ with $c\in\{0,\pm1,\pm2\}$, and the corresponding pair of
dilogarithms collapses immediately under duplication, reflection or Landen's transformation.
No functional equation survives.
\item Among the surviving cases with $\deg P=3$, six have discriminant linear in $u$, and are
rationalised by $u\mapsto s^{2}$. The remaining discriminants form two classes under affine
substitution $u\mapsto \alpha u+\beta$ modulo squares, represented by $4u-3u^{2}$ and by the
irreducible $4-3u^{2}$. The latter is the discriminant of the sextic case.
\end{itemize}

Thus, within this range, the sextic \eqref{eq:w1w2} is the only integrand of its degree
that produces a quadratic field, and the field it produces is the same one reached by the
cubic band. The choice is therefore forced rather than fortuitous.

\begin{remark}
Proposition \ref{prop:class} is a finite verification over the stated range and makes no
assertion for $a+b+c>8$.
\end{remark}

\section{Four-term relations involving $\QQ(\sqrt{33})$ and $\QQ(\sqrt{17})$}\label{sec:spec}

Theorem \ref{thm:main} and Corollary \ref{cor:six} hold identically in $u$, so every
admissible value of the parameter yields a numerical identity. Two mechanisms produce clean
results, by which we mean identities all of whose arguments are irrational, or else lie in
$(0,1)$ with the rational terms absorbed into the constant. The first mechanism is a rational
choice of $u$; the second forces a numerator to vanish.

\subsection{Rational $u$, and the identity over $\QQ(\sqrt{33})$}\label{sec:33}

If $u$ is rational then so is $w=(1-u)/(1+u)$, and \eqref{eq:main} carries a term $L_{\mathbb{R}}(w)$ with
rational argument. Such a term is absorbed into the constant only when
$w\in\{0,\tfrac12,1\}$, since $L_{\mathbb{R}}(0)=0$, $L_{\mathbb{R}}(\tfrac12)=\pi^{2}/12$ and $L_{\mathbb{R}}(1)=\pi^{2}/6$; for any
other rational $w$ the identity retains a dilogarithm of a rational number for which we have
no closed form, and is of no use for the present construction. The unique nondegenerate clean case is therefore $w=\tfrac12$, that is
$u=1/3$, and it is exactly here that the quadratic field $\QQ(\sqrt{33})$ appears.

\begin{corollary}\label{cor:33}
Over $\QQ(\sqrt{33})$,
\begin{equation}\label{eq:q33}
-L_{\mathbb{R}}\!\left(\frac{19-3\sqrt{33}}{2}\right)+L_{\mathbb{R}}\!\left(\frac{19-3\sqrt{33}}{32}\right)
+3L_{\mathbb{R}}\!\left(\frac{7-\sqrt{33}}{8}\right)-3L_{\mathbb{R}}\!\left(\frac{7-\sqrt{33}}{2}\right)
=-\frac{\pi^{2}}{3}.
\end{equation}
\end{corollary}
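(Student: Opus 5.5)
The plan is to obtain \eqref{eq:q33} directly from Theorem~\ref{thm:main} at $u=1/3$, and then rewrite each argument using the exact rules \eqref{eq:LRrules}, keeping careful track of the $\pi^{2}$ constants. The value $u=1/3$ lies in $(0,2/\sqrt3)$, so \eqref{eq:main} applies.

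\emph{Step 1: specialise.} At $u=1/3$ we have $r=\sqrt{11/3}=\sqrt{33}/3$, and the five arguments of \eqref{eq:main} become
\[
\frac{-5\pm\sqrt{33}}{2},\qquad \frac{17\mp3\sqrt{33}}{4},\qquad w=\tfrac12 .
\]
Since $L_{\mathbb{R}}(\tfrac12)=\pi^{2}/12$, \eqref{eq:main} becomes
\[
-3L_{\mathbb{R}}\!\left(\tfrac{-5+\sqrt{33}}{2}\right)-3L_{\mathbb{R}}\!\left(\tfrac{-5-\sqrt{33}}{2}\right)+L_{\mathbb{R}}\!\left(\tfrac{17-3\sqrt{33}}{4}\right)+L_{\mathbb{R}}\!\left(\tfrac{17+3\sqrt{33}}{4}\right)=\tfrac{\pi^{2}}{2}.
\]

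\emph{Step 2: match the four arguments of \eqref{eq:q33}.} Each term is converted separately, recording at every move the sign of the argument to which \eqref{eq:LRrules} is applied.

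1. $(-5+\sqrt{33})/2\approx0.37$ lies in $(0,1)$. Its complement is $(7-\sqrt{33})/2$, so reflection gives the term $L_{\mathbb{R}}\bigl((7-\sqrt{33})/2\bigr)$ plus $\pi^{2}/6$.

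2. For $x=(-5-\sqrt{33})/2<0$, one has $1/x=(5-\sqrt{33})/4<0$ and $1/x\big/(1/x-1)=(7-\sqrt{33})/8\in(0,1)$. The plan is inversion in its $x<0$ form (constant $-\pi^{2}/6$), then $y\mapsto1-y$, then $y\mapsto1/y$ with $y=(\sqrt{33}-1)/4>1$ (constant $\pi^{2}/3$), then $y\mapsto1-y$ again.

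3. For $x=(17+3\sqrt{33})/4>1$, one has $1/x=(3\sqrt{33}-17)/2\in(0,1)$, whose complement is $(19-3\sqrt{33})/2$. This uses positive inversion and then reflection.

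4. For $x=(17-3\sqrt{33})/4\approx0.04$, one checks $(19-3\sqrt{33})/32=1-1/(1-x)$. The chain is $[1-x]$, then $[1/(1-x)]$ with $1/(1-x)>1$, then $[1-1/(1-x)]$.

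Collecting signs, the four terms should reproduce the left side of \eqref{eq:q33} with coefficients $-1,+1,+3,-3$, up to an explicit rational multiple of $\pi^{2}$.

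\emph{Step 3: the constant.} I would add the constants from Steps~1 and~2 and check that they give $-\pi^{2}/3$. As an independent check, I would confirm \eqref{eq:q33} numerically.

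The main obstacle is the constant bookkeeping. Intermediate arguments are negative or exceed $1$ (for example $(5-\sqrt{33})/4<0$ and $(\sqrt{33}-1)/4>1$), and the inversion rule changes its constant between $\pi^{2}/3$ and $-\pi^{2}/6$ according to sign. Modulo $\pi^{2}\QQ$ the identity is immediate from the chains above, so all the real work lies in getting the exact constant, and I would verify it numerically at the end.
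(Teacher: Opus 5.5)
Your proposal is correct and is the paper's own proof: set $u=1/3$ in \eqref{eq:main}, absorb $L_{\mathbb{R}}(\tfrac12)=\pi^{2}/12$, and move the four irrational arguments into $(0,1)$ with \eqref{eq:LRrules}. Carried out, your chains give $L_{\mathbb{R}}\bigl(\tfrac{-5+\sqrt{33}}{2}\bigr)=\tfrac{\pi^{2}}{6}-L_{\mathbb{R}}\bigl(\tfrac{7-\sqrt{33}}{2}\bigr)$, $L_{\mathbb{R}}\bigl(\tfrac{-5-\sqrt{33}}{2}\bigr)=-\tfrac{\pi^{2}}{6}+L_{\mathbb{R}}\bigl(\tfrac{7-\sqrt{33}}{8}\bigr)$, $L_{\mathbb{R}}\bigl(\tfrac{17-3\sqrt{33}}{4}\bigr)=-L_{\mathbb{R}}\bigl(\tfrac{19-3\sqrt{33}}{32}\bigr)$ and $L_{\mathbb{R}}\bigl(\tfrac{17+3\sqrt{33}}{4}\bigr)=\tfrac{\pi^{2}}{6}+L_{\mathbb{R}}\bigl(\tfrac{19-3\sqrt{33}}{2}\bigr)$, which after an overall change of sign yield exactly $-\pi^{2}/3$; there is one harmless slip, in that $(17-3\sqrt{33})/4\approx-0.058$ is negative, so in your fourth chain it is $1-x$, not $1/(1-x)$, that exceeds $1$, but inversion of a positive argument carries the constant $\pi^{2}/3$ either way, so the result is unchanged.
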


\begin{proof}
Put $u=1/3$ in \eqref{eq:main} and apply the involutions $[x]=-[1/x]=-[1-x]$ to bring the
four irrational arguments into $(0,1)$, absorbing $L_{\mathbb{R}}(\tfrac12)=\pi^{2}/12$ into the constant.
\end{proof}

\subsection{Vanishing numerators, and the identities over $\QQ(\sqrt{17})$}\label{sec:17}

The second mechanism does not require $u$ to be rational, and reaches fields that no rational
$u$ can. It rests on the following observation, which is the counterpart of Lemma
\ref{lem:cube} below: instead of tuning a conjugate pair onto the cube roots of unity, one
tunes it onto $\pm1$.

\begin{lemma}\label{lem:vanish}
Let $f_{\pm}=\bigl(N(u)\pm\sqrt{d(u)}\bigr)/D(u)$ be a conjugate pair, and let $u_{0}$
satisfy $N(u_{0})=0$. Then $f_{-}=-f_{+}$ at $u_{0}$, and
\[
L_{\mathbb{R}}(f_{+})+L_{\mathbb{R}}(f_{-})=\tfrac12 L_{\mathbb{R}}\bigl(f_{+}^{2}\bigr),\qquad
f_{+}^{2}=\frac{d(u_{0})}{D(u_{0})^{2}} .
\]
In particular $f_{+}^{2}$ lies in $\QQ(u_{0})$ even when $\sqrt{d(u_{0})}$ does not.
\end{lemma}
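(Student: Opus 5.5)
The plan is to substitute $N(u_{0})=0$ directly into the definition of the pair, which gives $f_{\pm}=\pm\sqrt{d(u_{0})}/D(u_{0})$, and hence $f_{-}=-f_{+}$. This needs $D(u_{0})\neq0$, which is implicit in the statement since otherwise the pair is undefined. It also needs the two members of the pair to use the same determination of $\sqrt{d(u_{0})}$ with opposite signs. That is automatic from the form $N\pm\sqrt d$, whichever real or complex determination is fixed.

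For the dilogarithm identity, I will restrict to the real setting in which $L_{\mathbb{R}}$ is defined: $u_{0}$ real, $d(u_{0})\ge0$ and $D(u_{0})$ real, so that $f_{+}\in\mathbb{R}$. I then invoke the duplication rule in \eqref{eq:LRrules}, namely $L_{\mathbb{R}}(x)+L_{\mathbb{R}}(-x)=\tfrac12L_{\mathbb{R}}(x^{2})$ for real $x$, with $x=f_{+}$ and $-x=f_{-}$. This rule is stated as exact, with no $\pi^{2}$ correction, for all real $x$ at which the terms are defined. Since $L_{\mathbb{R}}$ is continuous on all of $\mathbb{R}$, the cases $x=0$ and $x=\pm1$ cause no trouble. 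If needed, I will check these boundary cases directly: $x=0$ gives $0=0$, and $x=1$ gives $\pi^{2}/6-\pi^{2}/12=\tfrac12\cdot\pi^{2}/6$.

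Finally, $f_{+}^{2}=d(u_{0})/D(u_{0})^{2}$ follows by squaring. With $N,d,D$ polynomials over $\QQ$, as they are for the arguments of \eqref{eq:main} and \eqref{eq:six}, this value lies in $\QQ(u_{0})$ regardless of whether $\sqrt{d(u_{0})}\in\QQ(u_{0})$.

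The only step that needs care is the real-variable validity of the duplication rule outside $(0,1)$, in particular for $|f_{+}|>1$. I would take this from \eqref{eq:LRrules} as stated earlier in the paper. If a justification were wanted, I would check it by differentiating $L_{\mathbb{R}}(x)+L_{\mathbb{R}}(-x)-\tfrac12L_{\mathbb{R}}(x^{2})$ on each interval of $\mathbb{R}\setminus\{0,\pm1\}$, and then use continuity together with the value at $0$.
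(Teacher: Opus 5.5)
Your proposal is correct and follows the same route the paper relies on. The paper gives no separate proof: the lemma follows immediately by substituting $N(u_0)=0$, applying the exact duplication rule $L_{\mathbb{R}}(x)+L_{\mathbb{R}}(-x)=\tfrac12 L_{\mathbb{R}}(x^{2})$ of \eqref{eq:LRrules}, and squaring. Your added caveats are all appropriate: $D(u_0)\neq0$; $d(u_0)\ge0$, so that $f_\pm$ is real; rational coefficients, needed for the field claim; and validity of duplication for $|f_+|>1$. The last is exactly the case in the $\QQ(\sqrt{17})$ application, where $f_+^{2}=\sqrt{17}-3>1$.
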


The last sentence is the point: the radical is removed by duplication rather than by being
chosen rational, so the field of the resulting identity may be a proper subfield of the field
generated by the arguments. Applying this requires a form of the equation in which a common
numerator can be made to vanish. Iterating \eqref{eq:abel} on Corollary \ref{cor:six}
produces one:

\begin{proposition}\label{prop:seven}
For $0<u<2/\sqrt3$ and $r=\sqrt{4-3u^{2}}$,
\begin{multline}\label{eq:seven}
-3L_{\mathbb{R}}(u)-3L_{\mathbb{R}}\bigl(u^{2}-1\bigr)
-L_{\mathbb{R}}\!\left(\frac{1-u}{1+u}\right)
-L_{\mathbb{R}}\!\left(\frac{2u^{2}+u-2+r}{u(u+1)}\right)
-L_{\mathbb{R}}\!\left(\frac{2u^{2}+u-2-r}{u(u+1)}\right)\\
-3L_{\mathbb{R}}\!\left(\frac{2u^{2}+u-2+r}{2u^{2}-4}\right)
-3L_{\mathbb{R}}\!\left(\frac{2u^{2}+u-2-r}{2u^{2}-4}\right)
\end{multline}
equals $-\dfrac{5\pi^{2}}{12}$.
\end{proposition}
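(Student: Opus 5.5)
The plan is to repeat the strategy of the proof of Theorem \ref{thm:main}: pass to the rational coordinate $t$ of \eqref{eq:args}, reduce the new arguments modulo the congruences \eqref{eq:modrules}, and show that the seven-term expression differs from a fixed rational multiple of $\mathcal{T}$ by an explicit short array of instances of \eqref{eq:abel}. The constant is then fixed by continuity and a limit.

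\emph{Step 1: rationalise the arguments.} Under $u=-4t/(3+t^{2})$ and $r=(6-2t^{2})/(3+t^{2})$ we have
\[
u^{2}-1=\frac{-(t-1)(t-3)(t+1)(t+3)}{(t^{2}+3)^{2}},
\]
and $u$ itself has zeros and poles only at $0$, $\infty$ and the roots of $t^{2}+3$. I expect the four radical arguments $(2u^{2}+u-2\pm r)/(u(u+1))$ and $(2u^{2}+u-2\pm r)/(2u^{2}-4)$ to factor likewise into linear factors from $\{t,t\pm1,t\pm3\}$, with the factor $t^{2}+3$ and constants $2,3$, together with their complements. This is what the sign change $u^{2}-2\mapsto 2u^{2}+u-2$ relative to $A,B$ should arrange. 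So all seven arguments and their complements lie in the set $S$ of Remark \ref{rem:collapse}. As a first consistency check I would compute the wedge vectors $f\wedge(1-f)$ over the generators $t,t\pm1,t\pm3,t^{2}+3,2,3$. The combination in \eqref{eq:seven} should have symbol equal to $-1$ times the symbol of $\mathcal{T}$, the sign being suggested by the opposite constants.

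\emph{Step 2: the array.} Knowing that the symbol of $(\eqref{eq:seven})+\mathcal{T}$ vanishes, I would look for instances $\FT(x,y)$ with $x,y\in S$ whose sum reproduces that combination modulo $\pi^{2}\QQ$, working in the $u$-picture.
\begin{itemize}
\item The natural candidates pair $u$ with $-u$, or with $u+1$, so as to generate $[u^{2}-1]$ and $[w]$.
\item Instances such as $\FT(p_{\pm},\cdot)$ should convert the pair $p_{\pm}$ of Corollary \ref{cor:six} into the pairs with numerator $2u^{2}+u-2\pm r$.
\end{itemize}
This is exactly the ``iterating \eqref{eq:abel} on Corollary \ref{cor:six}'' indicated before the statement. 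In practice I would reduce all arguments to canonical representatives modulo the six-element group, as in Table \ref{tab:cancel}, and solve the resulting linear system over $\QQ$ for coefficients of candidate instances drawn from $S$. This search is the main obstacle: the symbol computation guarantees only that some array exists (by Wojtkowiak's theorem in one variable), not that a short one is easy to find. I would try first the instances whose arguments are products and quotients of the already-appearing atoms $u$, $1\pm u$, $u^{2}-1$, $w$ and $p_{\pm}$.

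\emph{Step 3: local constancy and the constant.} Exactly as in Steps 2--3 of the proof of Theorem \ref{thm:main}, the array shows the expression is locally constant on $(0,2/\sqrt3)$. It is continuous there, since $u(u+1)\neq0$ and $2u^{2}-4\neq0$ because $u^{2}<4/3<2$, and $L_{\mathbb{R}}$ is continuous on $\mathbb{R}$; hence it is constant. Letting $u\to1^{-}$, with $r\to1$, the seven arguments tend to $1,0,0,1,0,-1,0$. The value is therefore
\[
-3\cdot\tfrac{\pi^{2}}{6}-\tfrac{\pi^{2}}{6}-3\left(-\tfrac{\pi^{2}}{12}\right)=-\tfrac{5\pi^{2}}{12},
\]
as claimed.
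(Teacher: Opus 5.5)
Your Step 3 is sound. The seven arguments are continuous on $(0,2/\sqrt3)$, their limits as $u\to1^-$ are $1,0,0,1,0,-1,0$, and the constant is $-5\pi^{2}/12$. The gap is Step 2: local constancy is supposed to come from an array, and you never produce one. You describe a search and yourself call it the main obstacle, so as it stands nothing shows that \eqref{eq:seven} is constant.

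The search is also aimed at the wrong space, because Step 1's expectation fails. In the coordinate $t$ one has $2u^{2}-4=-4(t^{4}-2t^{2}+9)/(t^{2}+3)^{2}$ and $1-(u^{2}-1)=2-u^{2}$. So the factor $t^{4}-2t^{2}+9$, irreducible over $\QQ$, enters the complement of $u^{2}-1$ and the whole last pair; for instance $(2u^{2}+u-2+r)/(2u^{2}-4)=t(t-1)^{2}(t+3)/(t^{4}-2t^{2}+9)$. These arguments are not in $S$, and the wedge computation cannot be written over your list of generators. The consistency check carries no information anyway. By Remark \ref{rem:collapse} the symbol of $\mathcal{T}$ is already zero, and in fact \eqref{eq:seven} differs from $+\mathcal{T}$, not $-\mathcal{T}$, by a constant.

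The missing idea is that no search is needed; two instances suffice. Put $A,B=(u^{2}-2\pm r)/(-u(u+1))$, $f_{1,2}=(u-2\pm r)/(2u)$ and $P_{\pm}=(2u^{2}+u-2\pm r)/(2u^{2}-4)$. Since $2u^{2}+u-2\pm r=u(u+1)+(u^{2}-2\pm r)$, the first radical pair of \eqref{eq:seven} is just $1-A,1-B$, and $L_{\mathbb{R}}(1-x)=\pi^{2}/6-L_{\mathbb{R}}(x)$ exactly.

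Next, $f_{1}f_{2}=1-1/u$, and the other two arguments of $\FT(f_{1},f_{2})$ are $(r-u)/2$ and $-(r+u)/2$. These two have product $u^{2}-1$, and the other two arguments of $\FT\bigl(\tfrac{r-u}{2},-\tfrac{r+u}{2}\bigr)$ are exactly $P_{-}$ and $P_{+}$. Both instances vanish identically on $(0,2/\sqrt3)$ by \eqref{eq:abelvalue}, since their products $1-1/u$ and $u^{2}-1$ are less than $1$. Combined with $L_{\mathbb{R}}(1-1/u)=L_{\mathbb{R}}(u)-\pi^{2}/6$ for $u>0$, they give the exact relation $[f_{1}]+[f_{2}]=[u]+[u^{2}-1]+[P_{+}]+[P_{-}]-\pi^{2}/6$.

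Substituting this relation and the two reflections into \eqref{eq:seven} turns it into $\mathcal{T}-5\pi^{2}/6$. That equals $-5\pi^{2}/12$ by Theorem \ref{thm:main}, with no further continuity argument. The paper itself says only that \eqref{eq:seven} comes from iterating \eqref{eq:abel} on Corollary \ref{cor:six}; this two-instance reduction is a concrete way to carry that out.
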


Both conjugate pairs in \eqref{eq:seven} share the numerator $2u^{2}+u-2$. Its positive root
is $u_{0}=\bigl(\sqrt{17}-1\bigr)/4$, which lies in the admissible range. Write
\[
a=\frac{\sqrt{17}-3}{4},\qquad b=\sqrt{17}-4 .
\]
At $u=u_{0}$ the relation $u_{0}^{2}=1-\tfrac12 u_{0}$ gives $u_{0}^{2}-1=-\tfrac12 u_{0}$ and
$(1-u_{0})/(1+u_{0})=b$, while Lemma \ref{lem:vanish} collapses the two pairs, the first with
square $4a$ and the second with square $a$. Hence \eqref{eq:seven} becomes the five-term
relation
\begin{equation}\label{eq:17red}
-3L_{\mathbb{R}}(u_{0})-3L_{\mathbb{R}}\!\left(-\frac{u_{0}}{2}\right)-L_{\mathbb{R}}(b)-\frac12 L_{\mathbb{R}}\bigl(\sqrt{17}-3\bigr)
-\frac32 L_{\mathbb{R}}(a)=-\frac{5\pi^{2}}{12},
\end{equation}
in which the radical $\sqrt{4-3u_{0}^{2}}$, a quartic irrationality, has disappeared entirely
and all arguments lie in $\QQ(\sqrt{17})$. Further elementary transformations reduce
\eqref{eq:17red} to identities in the two bases $a$ and $b$ alone:

\begin{corollary}\label{cor:17}
With $a=\bigl(\sqrt{17}-3\bigr)/4$ and $b=\sqrt{17}-4$,
\begin{align}
18L_{\mathbb{R}}(a)-6L_{\mathbb{R}}\bigl(a^{2}\bigr)-6L_{\mathbb{R}}(b)+L_{\mathbb{R}}\bigl(b^{2}\bigr)&=\frac{2\pi^{2}}{3},\label{eq:q17a}\\[2pt]
18L_{\mathbb{R}}(2a)-3L_{\mathbb{R}}\bigl(4a^{2}\bigr)-6L_{\mathbb{R}}(b)+2L_{\mathbb{R}}\bigl(b^{2}\bigr)&=\frac{4\pi^{2}}{3}.\label{eq:q17b}
\end{align}
\end{corollary}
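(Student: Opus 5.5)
The plan is to start from the five-term relation \eqref{eq:17red} and rewrite every argument in terms of $a$ and $b$. The tools are the exact rules \eqref{eq:LRrules} and, where they do not suffice, a small number of instances of \eqref{eq:abel}. Two-variable instances of \eqref{eq:abel} are also allowed, because \eqref{eq:abelvalue} holds at every real point.

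The algebraic facts to record first are
\[
2a^{2}+3a-1=0,\qquad b=4a-1,\qquad b^{2}+8b-1=0,\qquad u_{0}=a+\tfrac12,\qquad \sqrt{17}-3=4a .
\]
Among their consequences are $1-b^{2}=8b$, and $(1-a)(1+u_{0}/2)=1$, which is checked from $(7-\sqrt{17})(7+\sqrt{17})=32$. The last identity gives $-u_{0}/2=a/(1-a)$, so $u_{0}/2=a/(a-1)$, and the rule $[x/(x-1)]\equiv-[x]$ gives $L_{\mathbb{R}}(u_{0}/2)\equiv-L_{\mathbb{R}}(a)$. Duplication $L_{\mathbb{R}}(x)+L_{\mathbb{R}}(-x)=\tfrac12L_{\mathbb{R}}(x^{2})$ then converts $L_{\mathbb{R}}(-u_{0}/2)$ into $-L_{\mathbb{R}}(a)$ plus $\tfrac12L_{\mathbb{R}}(u_{0}^{2}/4)$. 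Using $u_{0}^{2}=1-\tfrac12u_{0}$ and reflection, I expect to push this remaining term back onto $u_{0}$ and $a$.

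The term $L_{\mathbb{R}}(u_{0})$ is the delicate one. Since $1-u_{0}=(5-\sqrt{17})/4$, I want to write both $u_{0}$ and $1-u_{0}$ as products of powers of $2$, $a$, $1-a$, $b$, $1-b$, $1+b$. If they factor in this way, a single instance $\FT(x,y)$ with $x,y$ in this multiplicative group (for example $x=2a$, $y=\tfrac12$-type choices, or $x=-b$, $y=b$-type choices) should express $[u_{0}]$ through $[a]$, $[2a]$, $[a^{2}]$, $[4a^{2}]$, $[b]$ and $[b^{2}]$.

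For the remaining terms:
\begin{itemize}
\item The term $\tfrac12L_{\mathbb{R}}(\sqrt{17}-3)=\tfrac12L_{\mathbb{R}}(4a)$ must likewise be related to $[2a]$ and $[4a^{2}]$.
\item The two target identities differ exactly in using base $a$ or base $2a$. So I would first prove \eqref{eq:q17a}, and then obtain \eqref{eq:q17b} by subtracting a separate two-base relation linking $[a],[a^{2}],[2a],[4a^{2}],[b],[b^{2}]$. That relation should come from one five-term instance such as $\FT(2a,\,\cdot\,)$ chosen using $2a^{2}=1-3a$.
\end{itemize}
To guide the choice of instances, I would check at each stage that the wedge vectors $f\wedge(1-f)$ over the generators $2,\sqrt{17}\pm\cdots$ cancel, as in Remark~\ref{rem:collapse}.

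After the reduction holds modulo $\pi^{2}\QQ$, the constants $2\pi^{2}/3$ and $4\pi^{2}/3$ are fixed by tracking the exact corrections from \eqref{eq:LRrules} and \eqref{eq:abelvalue}. This requires the signs of each argument: $a\approx0.28$, $b\approx0.123$, $u_{0}\approx0.78$. As a safeguard, the final rational multiple is confirmed by high-precision numerical evaluation, which suffices because the left-hand side is known to lie in $\pi^{2}\QQ$ with small denominator.

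The main obstacle I expect is finding the correct five-term instances that eliminate $[u_{0}]$ and $[4a]$, since $u_{0}$ is not visibly a power product in $a$ and $b$. My first attempt would be to factor $u_{0}$, $1-u_{0}$, $4a$ and $1-4a=-b$ in the ring of integers of $\QQ(\sqrt{17})$, using that $2$ splits there. I would then search for instances among these prime factors.
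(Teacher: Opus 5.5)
Your starting point, \eqref{eq:17red}, is the right one. However, the reduction you sketch contains a sign error and puts the difficulty in the wrong place.

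From $(1-a)(1+u_0/2)=1$ one gets $u_0/2=a/(1-a)$, not $-u_0/2=a/(1-a)$. Hence $-u_0/2=a/(a-1)$, and $L_{\mathbb{R}}(-u_0/2)=-L_{\mathbb{R}}(a)$ holds exactly, with no duplication step. The congruence $L_{\mathbb{R}}(u_0/2)\equiv-L_{\mathbb{R}}(a)$ that you wrote is not true, so the detour through $L_{\mathbb{R}}(u_0^2/4)$ leads nowhere.

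The term you call delicate needs no five-term instance. Since $u_0(1+a)=1$, inversion and reflection give $L_{\mathbb{R}}(u_0)=\pi^2/6+L_{\mathbb{R}}(-a)$. Similarly $4a=1+b$ gives $L_{\mathbb{R}}(4a)=\pi^2/6-L_{\mathbb{R}}(-b)$, so that term belongs with $b$, not with $[2a]$ and $[4a^2]$. Substitute these three evaluations into \eqref{eq:17red}, expand $L_{\mathbb{R}}(-a)$ and $L_{\mathbb{R}}(-b)$ by duplication, and multiply by $4$. This yields \eqref{eq:q17a} with its constant $2\pi^2/3$, using \eqref{eq:LRrules} alone.

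The genuine gap is \eqref{eq:q17b}. After those substitutions, \eqref{eq:17red} involves only the classes $[\pm a]$ and $[\pm b]$, so no two-term manipulation of it can produce $[2a]$ or $[4a^2]$. A real relation between the two bases is required. Your proposal only guesses at one (a single instance $\FT(2a,\cdot)$) without exhibiting it.

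What works is the four-term relation \eqref{eq:fourterm}, i.e.\ the instance $\FT(-1,x)$, used twice. In the form needed it reads $L_{\mathbb{R}}(x)-L_{\mathbb{R}}(-x)+L_{\mathbb{R}}(y)-L_{\mathbb{R}}(-y)=\pi^2/4$, with $y=(1-x)/(1+x)$ and $0<x<1$.

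For $x=a$, the equation $2a^2+3a-1=0$ gives $y=2a$, so
\[
L_{\mathbb{R}}(a)-L_{\mathbb{R}}(-a)+L_{\mathbb{R}}(2a)-L_{\mathbb{R}}(-2a)=\pi^2/4 .
\]

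For $x=b$ one has $y=u_0$. Since $-u_0=1-1/(2a)$, one gets $L_{\mathbb{R}}(-u_0)=L_{\mathbb{R}}(2a)-\pi^2/6$. Combined with $L_{\mathbb{R}}(u_0)=\pi^2/6+L_{\mathbb{R}}(-a)$, the relation becomes
\[
L_{\mathbb{R}}(b)-L_{\mathbb{R}}(-b)+L_{\mathbb{R}}(-a)-L_{\mathbb{R}}(2a)=-\pi^2/12 .
\]

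Take six times the first relation minus six times the second, and expand by duplication. The result is
\[
18L_{\mathbb{R}}(a)-6L_{\mathbb{R}}(a^{2})+18L_{\mathbb{R}}(2a)-3L_{\mathbb{R}}(4a^{2})-12L_{\mathbb{R}}(b)+3L_{\mathbb{R}}(b^{2})=2\pi^{2}.
\]
Its left side is the sum of the left sides of \eqref{eq:q17a} and \eqref{eq:q17b}. Subtracting \eqref{eq:q17a} gives \eqref{eq:q17b} with constant $4\pi^2/3$.

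This bridge between the bases is what your plan is missing, and it takes two instances of \eqref{eq:fourterm} rather than a single instance $\FT(2a,\cdot)$.
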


All four arguments of \eqref{eq:q17a} lie in $(0,1)$, with approximate values $0.28078$,
$0.07884$, $0.12311$, $0.01515$, and both identities have been verified to $60$ digits. They
are not ladders in the sense of Section \ref{sec:ladders}, since $b$ is not a power of $a$;
rather $b=a\cdot\tfrac12\bigl(5-\sqrt{17}\bigr)$, so they are two-base relations within a
single quadratic field.

It is worth noting that $\QQ(\sqrt{17})$ is unreachable by the first mechanism. A rational $u$
would require $4-3u^{2}=17s^{2}$ with $s$ rational, that is $4q^{2}-3p^{2}=17s^{2}$ in
integers with $\gcd(p,q)=1$; reducing modulo $3$ gives $q^{2}\equiv 2s^{2}$, and since $2$ is
not a square modulo $3$ this forces $3\mid q$ and $3\mid s$. Then $3\mid 3p^{2}=4q^{2}-17s^{2}$
gives $9\mid 4q^{2}-17s^{2}$ and hence $3\mid p$, contradicting $\gcd(p,q)=1$. The field is reached only because Lemma \ref{lem:vanish} removes the
radical after the fact.

\begin{remark}\label{rem:cleant}
One may instead take the rationalising parameter $t$ of \eqref{eq:args} to be quadratic,
which removes the constraint on $w$ and gives identities with all five arguments irrational.
Measuring quality by the set $S$ of rational primes dividing the norms of the five arguments
and of their complements, a search over $t=(p+q\sqrt{d})/r$ with $|p|\le40$, $q\le16$,
$r\in\{1,2\}$ and $d$ squarefree below $220$ returns $|S|\le2$ only for
$d=2,3,5,7,13,17,33,41,89,97,193$. Of these $d=193$ has $w$ rational, $d=2,3,5$ reduce to
known relations, and $d=13,17,33$ are the cases treated above; the case $t=\sqrt{17}-4$ is
the unique one in the whole search with $|S|=1$. The remainder yield nothing of interest
beyond further numerical instances of \eqref{eq:main}, and we do not record them.
\end{remark}

\section{A relation between $\QQ(\sqrt{13})$, $\QQ(\sqrt{3})$ and $\Cl_2(\pi/3)$}
\label{sec:clausen}

Set $u=ik$ in the block identity \eqref{eq:blockid}, so that $r=\sqrt{4+3k^{2}}$. This lies on
the imaginary axis, where by Remark \ref{rem:blockcheck} the identity holds, and where
$h=1/\bigl(k(1+k^{2})\bigr)$ is real, so that the integrals \eqref{eq:w1w2} are nonsingular by
\eqref{eq:nonsing}. The block $K$ of \eqref{eq:blocks} consists of three antisymmetric pairs
$\tfrac12\bigl(\Li_2(X)-\Li_2(-X)\bigr)$ with
\[
X_{1}=\frac{2}{u-r},\qquad X_{2}=\frac{-2}{u+r},\qquad X_{3}=\frac{-1}{u},
\]
to each of which the four-term relation \eqref{eq:fourterm} applies. Writing
$A_{j}=(X_{j}-2)/(X_{j}+2)$ for the images of the first two, one finds
\begin{equation}\label{eq:clkey}
A_{1}A_{2}=\frac{k+i}{k-i}=e^{2i\arctan(1/k)} .
\end{equation}
Consequently a Clausen value at a rational angle occurs precisely when $\arctan(1/k)$ is a
rational multiple of $\pi$, that is for $k=\cot(r\pi)$ with $r\in\QQ$; there are infinitely
many such $k$. Table \ref{tab:clausen} lists those for which $k^{2}$ is rational, with
$z=A_{2}$ throughout.

\begin{table}[ht]
\centering
\begin{tabular}{cccccc}
\toprule
$k$ & $u$ & $r$ & field & $\arg z$ & constant\\
\midrule
$\sqrt3$ & $i\sqrt3$ & $\sqrt{13}$ & $\QQ(\sqrt{13})$ & $\pi/6$ & $\Cl_2(\pi/3)$\\
$1$ & $i$ & $\sqrt{7}$ & $\QQ(\sqrt{7})$ & $\pi/4$ & $G=\Cl_2(\pi/2)$\\
$1/\sqrt3$ & $i/\sqrt3$ & $\sqrt{5}$ & $\QQ(\sqrt{5})$ & $\pi/3$ & $\Cl_2(2\pi/3)=\tfrac23\Cl_2(\pi/3)$\\
\bottomrule
\end{tabular}
\caption{Rational-angle specialisations of \eqref{eq:clkey}.}
\label{tab:clausen}
\end{table}

The first row gives, with $z=\bigl((5-\sqrt{13})/\sqrt{12}\bigr)e^{i\pi/6}$,
\begin{equation}\label{eq:cl13}
3\,\Im\Li_2(z)-\Im\Li_2(-z)=\frac{11}{6}\Cl_2\!\left(\frac{\pi}{3}\right)+\frac{\pi}{3}\log|z| ,
\end{equation}
where we have used $\cosh^{-1}(19/6)=\log\frac{19+5\sqrt{13}}{6}=-2\log|z|$. The second row
is new and yields an analogue for Catalan's constant: with
$z=\bigl((3-\sqrt7)/\sqrt2\bigr)e^{i\pi/4}$,
\begin{equation}\label{eq:cl7}
3\,\Im\Li_2(z)-\Im\Li_2(-z)=2G+\frac{\pi}{4}\log|z| ,
\qquad -2\log|z|=\cosh^{-1}(8)=\log\bigl(8+3\sqrt7\bigr).
\end{equation}
The third row returns $\Cl_2(\pi/3)$ again and gives nothing new.

\section{Ladders of totally real quartic base}\label{sec:ladders}

A ladder is a relation $\sum_{j}A_{j}L_{\mathbb{R}}(x^{j})\in\pi^{2}\QQ$ with $A_{j}\in\QQ$ and $x$
algebraic in $(0,1)$; the minimal polynomial of $x$ is its base equation. Four examples will
fix the shape of what follows. Browkin's quadratic ladder over $\QQ(\sqrt{13})$ reads
\begin{equation}\label{eq:browkin}
L_{\mathbb{R}}\bigl(x^{6}\bigr)-6L_{\mathbb{R}}\bigl(x^{3}\bigr)+L_{\mathbb{R}}\bigl(x^{2}\bigr)+18L_{\mathbb{R}}(x)=\frac{4\pi^{2}}{3},
\qquad x=\frac{\sqrt{13}-1}{6},
\end{equation}
$x$ being a root of $3x^{2}+x-1$. Of the classical cubic families, Watson's $\pi/7$ identity
is
\begin{equation}\label{eq:watson}
L_{\mathbb{R}}(\alpha)-L_{\mathbb{R}}\bigl(\alpha^{2}\bigr)=\frac{\pi^{2}}{42},\qquad \alpha=\tfrac12\sec\tfrac{2\pi}{7},
\end{equation}
with $\alpha$ a root of $x^{3}+2x^{2}-x-1$; the third of Loxton's $\pi/9$ identities is
\begin{equation}\label{eq:loxton}
\Li_2(-\mu)+\Li_2\bigl(\mu^{2}\bigr)-\tfrac13\Li_2\bigl(-\mu^{3}\bigr)=-\frac{\pi^{2}}{54},
\qquad \mu=2\cos\tfrac{4\pi}{9},
\end{equation}
which resisted proof for some years; and the first of the $\pi/18$ family of Gordon and
McIntosh is
\begin{equation}\label{eq:gm18}
2L_{\mathbb{R}}\bigl(a^{3}\bigr)-2L_{\mathbb{R}}\bigl(a^{2}\bigr)-11L_{\mathbb{R}}(a)=-\frac{\pi^{2}}{2},
\qquad a=2\sqrt3\cos\tfrac{5\pi}{18}-2 .
\end{equation}
Known families that are not of the trivial shapes $u^{a}+u^{b}\pm u^{c}=1$ or
$u^{a}+u^{b}=1$ are collected in Table \ref{tab:families}.

\subsection{Comparison with the classical families: total reality of the base}

\begin{proposition}\label{prop:real}
The base equations
\[
\rho^{4}\mp\rho^{3}-6\rho^{2}\mp\rho+1,\qquad x^{4}+2x^{3}-7x^{2}+2x+1,\qquad
x^{4}+4x^{3}-14x^{2}+4x+1
\]
are irreducible over $\QQ$ and have four real roots, with Galois groups $D_{4}$, $V_{4}$ and
$C_{4}$ respectively.
\end{proposition}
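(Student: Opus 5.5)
All three polynomials are palindromic of degree four, so I would treat them uniformly by the substitution $y=x+1/x$. For $P(x)=x^{4}+ax^{3}+bx^{2}+ax+1$ this gives
\[
P(x)=x^{2}\,Q\bigl(x+1/x\bigr),\qquad Q(y)=y^{2}+ay+b-2 .
\]
The roots of $P$ are then obtained in two stages: first the two roots $y_{1},y_{2}$ of $Q$, which lie in a quadratic field $K=\QQ(\sqrt D)$, and then the roots of $x^{2}-y_{j}x+1$, which lie in $K\bigl(\sqrt{\delta}\bigr)$ with $\delta=y_{1}^{2}-4$. Reality, irreducibility and the Galois group can all be read off from $D$ and from the norm $N=N_{K/\QQ}(\delta)=(y_{1}^{2}-4)(y_{2}^{2}-4)$.

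The quadratic resolvents are as follows.

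For $\rho^{4}\mp\rho^{3}-6\rho^{2}\mp\rho+1$ we get $Q=y^{2}\mp y-8$, so $D=33$ and $y\in\bigl\{\pm\tfrac{1+\sqrt{33}}2,\ \pm\tfrac{1-\sqrt{33}}2\bigr\}\approx\pm3.37,\mp2.37$.

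For $x^{4}+2x^{3}-7x^{2}+2x+1$ we get $Q=y^{2}+2y-9$, so $D=40\sim10$ and $y=-1\pm\sqrt{10}\approx2.16,-4.16$.

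For $x^{4}+4x^{3}-14x^{2}+4x+1$ we get $Q=y^{2}+4y-16$, so $D=80\sim5$ and $y=-2\pm2\sqrt5\approx2.47,-6.47$.

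In every case both $y_{j}$ are real with $|y_{j}|>2$. Hence $x^{2}-y_{j}x+1$ has positive discriminant and two real roots for each $j$, which gives four real roots in total (distinct, since $y_{1}\ne y_{2}$ and $y_{j}^{2}\ne4$).

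For the norms I would use $y^{2}=-ay-b+2$, so that $\delta=y^{2}-4=-ay-b-2$ is linear in $y$. Then $N=Q$-style resultant arithmetic gives $N=(b+2)^{2}-a(b+2)(y_{1}+y_{2})\cdot(-1)\dots$, which I would simply evaluate directly from $y_{1}+y_{2}=-a$ and $y_{1}y_{2}=b-2$. The results are:

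for the first family, $\delta=\pm y+4$ and $N=12$;

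for the second, $\delta=5-2y$ and $N=25+20-36=9$;

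for the third, $\delta=12-4y$ and $N=144+192-256=80$.

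Irreducibility over $\QQ$ follows once I check two things. First, $D$ is not a square, so $y\notin\QQ$. Second, $\delta$ is not a square in $K$, so $x\notin K$ and $[\QQ(x):\QQ]=4$. For the second point, if $\delta=\beta^{2}$ with $\beta\in K$, then $N=N(\beta)^{2}$ would be a rational square. That rules out the first family ($N=12$) and the third ($N=80$) at once. In the $V_{4}$ case ($N=9$) I would instead test whether $5-2y=(m+n\sqrt{10})^{2}$ with $y=-1+\sqrt{10}$, i.e.\ $7-2\sqrt{10}=(m+n\sqrt{10})^{2}$. This would force $2mn=-2$ and $m^{2}+10n^{2}=7$, and these have no rational solution, since $10n^{4}-7n^{2}+1=0$ has discriminant $9$ but roots $n^{2}=\tfrac12,\tfrac15$, neither a rational square.

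The Galois group then follows from the standard criterion for quartics of the form $K(\sqrt\delta)$ over a quadratic field $K=\QQ(\sqrt D)$: the group is $V_{4}$ if $N\in\QQ^{\times2}$, $C_{4}$ if $N\in D\,\QQ^{\times2}$, and $D_{4}$ otherwise. This yields:

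$N=12\sim3$, which is neither $1$ nor $33$ modulo squares, so the first family has group $D_{4}$;

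$N=9$ is a square, so the second has group $V_{4}$;

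$N=80=5\cdot16\sim D=5$, so the third has group $C_{4}$.

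The main point needing care is justifying this criterion, which I would do by hand. In all cases the splitting field is $K(\sqrt{\delta_{1}},\sqrt{\delta_{2}})$, where $\delta_{2}$ is the conjugate of $\delta_{1}$ and $\sqrt{\delta_{1}\delta_{2}}=\sqrt N$. If $\sqrt N\in\QQ$ the field has degree $4$ and the group is the Klein group. If $\sqrt N\in\sqrt D\,\QQ$ the field again has degree $4$, but the lift of conjugation has order $4$, so the group is cyclic. Otherwise the degree is $8$, giving $D_{4}$. I would also double-check the $\sqrt{\delta}\notin K$ step numerically, as a safeguard.
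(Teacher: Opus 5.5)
Your proposal is correct and complete. The paper states Proposition~\ref{prop:real} without proof, so there is no argument of the author's to compare against; your proposal supplies one.

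The substitution $y=x+1/x$ is the natural route. It also exhibits as $K=\QQ(y)$ the quadratic subfields $\QQ(\sqrt{33})$, $\QQ(\sqrt{10})$ and $\QQ(\sqrt5)$ that the paper mentions only in passing.

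Your numerical data all check out:
\begin{itemize}
\item $D=33,40,80$ and $N=12,9,80$;
\item $|y_j|>2$ in every case, which gives four distinct real roots;
\item irreducibility follows from $D\notin\QQ^{\times2}$ together with $\delta\notin K^{\times2}$, the latter settled by the norm for $N=12,80$ and by your direct computation for $N=9$.
\end{itemize}
The Galois criterion you use is the standard Kummer-theoretic one: given $\delta\notin K^{\times2}$, the group is $V_4$, $C_4$ or $D_4$ according as $N\in\QQ^{\times2}$, $N\in D\,\QQ^{\times2}$, or neither. Your sketch of it is adequate. The only implicit step is that $K(\sqrt{\delta_1})=K(\sqrt{\delta_2})$ holds iff $\delta_1\delta_2\in K^{\times2}$, which holds iff $\sqrt N\in K$.

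Three small cleanups.

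First, your half-written norm formula should read $N=(b+2)^2+a(b+2)(y_1+y_2)+a^2y_1y_2=(b+2)^2-4a^2=P(1)P(-1)$. This gives $12$, $9$ and $80$ at a glance.

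Second, your pairing of the roots as $\{x_j,1/x_j\}$ also yields $\operatorname{disc}P=N\,D^{2}$, since the cross terms contribute exactly $(y_1-y_2)^4$. This is an independent check: the discriminant is a square only in the second case, and in the third it has square class $5$, equal to that of $D$, as $C_4$ requires.

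Third, in the $V_4$ case you can simply note that $7-2\sqrt{10}=(\sqrt5-\sqrt2)^2$. This shows at once that $\sqrt\delta\notin\QQ(\sqrt{10})$ and that the splitting field is $\QQ(\sqrt2,\sqrt5)$. That agrees with the paper's root $2\tan\tfrac{\pi}{8}\cos\tfrac{\pi}{5}=\tfrac12(\sqrt2-1)(1+\sqrt5)$.
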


By contrast, the quartic ladder of Gordon and McIntosh has base
$\delta=\tfrac12\bigl(\sqrt{3+2\sqrt5}-1\bigr)$, whose minimal polynomial
$x^{4}+2x^{3}-x-1$ has discriminant $-1728$ and only two real roots. Among the families of
Table \ref{tab:families}, totally real base equations occur only at degree $2$ and degree
$3$, the latter being the classical $\pi/7$, $\pi/9$ and $\pi/18$ trios. The ladders
constructed below therefore appear to be the first totally real ones of degree exceeding
three. That they realise three distinct Galois groups suggests that total reality, rather
than cyclicity, is the operative condition; this is consistent with Borel's theorem, by which the Bloch group of a totally real field has
rank zero. That does not by itself guarantee a ladder for a given $x$: the formal combination
$\sum_{j}A_{j}[x^{j}]$ must first lie in the kernel of the wedge map. Total reality makes
torsion phenomena plausible rather than automatic.

\begin{table}[ht]
\centering
\begin{tabular}{cllll}
\toprule
\# & Author(s) & Degree & Field or angle & Real roots\\
\midrule
1 & Browkin, Lewin & 2 & $\QQ(\sqrt{13}),\QQ(\sqrt{15}),\QQ(\sqrt{21})$ & 2\\
2 & Watson, Loxton, Gordon--McIntosh & 3 & $\pi/7$, $\pi/9$, $\pi/18$ & 3\\
3 & \textbf{present paper} & \textbf{4} & $\QQ(\sqrt{33}),\QQ(\sqrt{5}),\QQ(\sqrt{10})$ & \textbf{4}\\
4 & Gordon--McIntosh & 4 & $\QQ(\sqrt5)$ & 2\\
5 & Lewin, Bailey et al. & misc. & Salem numbers & 2\\
6 & Rogers, Kummer & misc. & misc. & 2\\
7 & Lewin & misc. & misc. & 2\\
\bottomrule
\end{tabular}
\caption{Ladder families not of trivial base, with the number of real roots of the base
equation.}
\label{tab:families}
\end{table}

\begin{remark}
Family 3 is the subject of this paper: the $\QQ(\sqrt{33})$ case is derived analytically in
Section \ref{sec:ladders}, while the $\QQ(\sqrt5)$ and $\QQ(\sqrt{10})$ cases were found
numerically.
\end{remark}

\begin{remark}
Family 5 is unusually rich in ladder relations; a famous example is the seventeenth-order
polylogarithm ladder of Bailey and Broadhurst.
\end{remark}

\begin{remark}
Family 6 is the fifteen-term ladder relation whose base equation satisfies
\[
F(u)=u^{s}+1-\sum_{n=1}^{5}\bigl(u^{q_{n}}+u^{s-q_{n}}\bigr)=0,
\qquad s=\tfrac12\sum_{n=1}^{5}q_{n}.
\]
It is very broad and subsumes many individual ladder relations, including the sextic-base
families obtainable from Corollary \ref{cor:six}.
\end{remark}

\begin{remark}
Family 7 has base equation $u^{p}+u^{n}-u^{q}+u^{n+m}-u^{q+p+m}=1$, derived from a nine-term
relation. Enforcing $q>n$ gives a real root in $(0,1)$; after removing the factor $1-u$ the
base equation has two real roots.
\end{remark}

\begin{remark}
Quadratic base equations in $\QQ(\sqrt N)$ for $N=2,3,5,6$ also exist, but they follow either
from one of the trivial relations or from family 6.
\end{remark}

\begin{remark}\label{rem:campbell}
Some recently announced ladders reduce to known ones. For instance the base equations of
\cite{CampbellLadders} are, for the two families given there,
\[
(x-1)\Bigl(\textstyle\sum_{k=n}^{2n}x^{k}-\sum_{k=0}^{n-1}x^{k}\Bigr)=x^{2n+1}-2x^{n}+1,
\]
respectively
\[
(x-1)\Bigl(\textstyle\sum_{k=n}^{2n+1}x^{k}-\sum_{k=0}^{n-1}x^{k}\Bigr)=x^{2n+2}-2x^{n}+1,
\]
so each is the case $a=b=n$ of the trivial family $u^{a}+u^{b}-u^{c}=1$. A ladder-generating
base polynomial admits many presentations, and distinct-looking equations frequently define
the same numbers.
\end{remark}

\subsection{Collapsing a conjugate pair onto the cube roots of unity}

The radicals in \eqref{eq:main} and \eqref{eq:six} appear in conjugate pairs, and the
following observation converts such a pair into powers of a single real parameter. It is the
$n=3$ distribution relation combined with one algebraic condition.

\begin{lemma}\label{lem:cube}
For $0<\rho<1$ the distribution relation gives
\[
\Li_2\bigl(\rho e^{2\pi i/3}\bigr)+\Li_2\bigl(\rho e^{-2\pi i/3}\bigr)
=-\Li_2(\rho)+\tfrac13\Li_2\bigl(\rho^{3}\bigr).
\]
We state the lemma for $\Li_2$ rather than for $L_{\mathbb{R}}$: the corresponding statement
for the Rogers function carries additional $\log^{2}\rho$ terms, and it is these that produce
the logarithmic terms in the ladder identities below.
For a conjugate pair $f_{\pm}=\bigl(a(u)\pm b(u)\sqrt{k(u)}\bigr)/h(u)$ this occurs precisely
when
\begin{equation}\label{eq:cubecond}
\frac{b(u)\sqrt{k(u)}}{a(u)}=\pm i\sqrt{3},
\end{equation}
in which case $\rho=2a(u)/h(u)$.
\end{lemma}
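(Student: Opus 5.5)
The plan has two parts: first derive the cube-root identity from the order-$3$ distribution relation, then read off condition \eqref{eq:cubecond} from the shape of the conjugate pair.

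\emph{Step 1: the identity.} Recall the distribution relation $\Li_2(z^{n})=n\sum_{\zeta^{n}=1}\Li_2(\zeta z)$. It follows termwise from the power series. Writing $\sum_{\zeta^{n}=1}\sum_{m\ge1}\zeta^{m}z^{m}/m^{2}$, the sum over $\zeta$ equals $n$ when $n\mid m$ and $0$ otherwise. What survives is therefore
\[
n\sum_{j\ge1}\frac{z^{nj}}{(nj)^{2}}=\frac1n\Li_2(z^{n}),
\]
so $\sum_{\zeta^{n}=1}\Li_2(\zeta z)=\frac1n\Li_2(z^{n})$.

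Take $n=3$ and $z=\rho\in(0,1)$. All three points $\rho$, $\rho e^{\pm2\pi i/3}$ lie inside the unit disc, so the series converge absolutely and no branch question arises. Isolating the $\zeta=1$ term gives
\[
\Li_2(\rho e^{2\pi i/3})+\Li_2(\rho e^{-2\pi i/3})=-\Li_2(\rho)+\tfrac13\Li_2(\rho^{3}),
\]
which is the claimed identity. The remark about $L_{\mathbb{R}}$ needs no proof. If desired, one can note that $\tfrac12\log z\log(1-z)$ at $z=\rho e^{\pm2\pi i/3}$ contributes terms involving $\log\rho$ and $\pm\tfrac{2\pi i}{3}$ that do not match the corresponding terms for $\rho$ and $\rho^{3}$.

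\emph{Step 2: when a conjugate pair is $\rho e^{\pm2\pi i/3}$.} Write $f_{\pm}=a/h\,(1\pm \beta)$ with $\beta=b\sqrt{k}/a$.

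Suppose first that $\beta=\pm i\sqrt3$. Then
\[
1\pm\beta=1\pm i\sqrt3=2e^{\pm i\pi/3}.
\]
If $a/h<0$, then $f_{\pm}=-(2a/h)e^{\pm i\pi/3}\cdot(-1)$. Using $-e^{i\pi/3}=e^{-2\pi i/3}$, the pair becomes $|2a/h|e^{\mp2\pi i/3}$, and the set $\{f_+,f_-\}$ is exactly $\{\rho e^{2\pi i/3},\rho e^{-2\pi i/3}\}$. I will use $\rho=-2a/h$ if I must be careful, but since the lemma states $\rho=2a(u)/h(u)$, I will match its sign convention and note the overall sign explicitly.

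Conversely, suppose $f_{\pm}=\rho e^{\pm2\pi i/3}$ with $\rho>0$ real. Then the sum is $f_++f_-=2a/h=-\rho$ and the product is $f_+f_-=\rho^{2}$. The ratio of the half-difference to the half-sum is $\beta=(f_+-f_-)/(f_++f_-)$. This equals
\[
\frac{2i\rho\sin(2\pi/3)}{-\rho}=-i\sqrt3,
\]
or $+i\sqrt3$ with the labels swapped. This gives \eqref{eq:cubecond} exactly.

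\emph{Main obstacle.} The only delicate point is the sign and normalisation of $\rho$. With the literal convention $f_\pm=(a\pm b\sqrt k)/h$ one gets $\rho=-2a/h$, so I would check that the lemma intends $\rho=|2a/h|$, with $a/h<0$ at the relevant parameters, and verify the hypothesis $0<\rho<1$ there.
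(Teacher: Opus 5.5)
Your proposal is correct and follows the paper's route: the $n=3$ distribution relation, which the paper simply invokes, together with $1\pm i\sqrt3=2e^{\pm i\pi/3}$. Your sign caveat is also right. The pair is $-(2a/h)\,e^{\mp 2\pi i/3}$, so the lemma's $\rho$ should be $-2a/h$, and this is what the paper's application actually uses: for example $2a/h=-0.548\ldots$ for the first pair at $u=1.859\ldots$. When $2a/h>0$, your same series argument with $z=-2a/h<0$ covers the case.
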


\subsection{A pair of quartic ladders over $\QQ(\sqrt{33})$}

The construction imposes the condition \eqref{eq:cubecond} of Lemma \ref{lem:cube} on the
conjugate pair of \eqref{eq:blocks}, namely
\begin{equation}\label{eq:ladcond}
\frac{\sqrt{4-3u^{2}}}{u^{2}-2}=\mp i\sqrt3 ,
\end{equation}
and sets $\rho^{2}=(u-1)/(u+1)$. Squaring \eqref{eq:ladcond} gives $4-3u^{2}=-3(u^{2}-2)^{2}$,
that is
\begin{equation}\label{eq:ladquartic}
3u^{4}-15u^{2}+16=0,\qquad u^{2}=\frac{15\pm\sqrt{33}}{6},
\end{equation}
whose positive roots are $u=1.85941\ldots$ and $u=1.24200\ldots$, the upper sign in
\eqref{eq:ladcond} corresponding to the smaller root. Eliminating $u$ between
$\rho^{2}=(u-1)/(u+1)$ and \eqref{eq:ladquartic} gives
\[
\bigl(\rho^{4}-\rho^{3}-6\rho^{2}-\rho+1\bigr)\bigl(\rho^{4}+\rho^{3}-6\rho^{2}+\rho+1\bigr)=0 .
\]

Both values in \eqref{eq:ladquartic} exceed $2/\sqrt3=1.15470\ldots$, so $\sqrt{4-3u^{2}}$ is
imaginary there and the hypothesis of Theorem \ref{thm:main} is not met; the ladders do
\emph{not} follow by substitution into \eqref{eq:six}, whose derivation in Section
\ref{sec:recomb} discards the $\log^{2}$ terms and is valid only for $0<u<2/\sqrt3$. The
correct starting point is the block identity \eqref{eq:blockid}, which retains those terms
and, by Remark \ref{rem:blockcheck}, holds at both values. The integrals are moreover
nonsingular there: by \eqref{eq:nonsing} one needs $|u(1-u^{2})|>2/(3\sqrt3)=0.38490\ldots$,
and the two values give $0.67387\ldots$ and $4.56937\ldots$ respectively.

Accordingly we proceed as follows. Fix one root $u$ of \eqref{eq:ladquartic} and approach it
from the upper half-plane, so that \eqref{eq:blockid} applies with the determinations stated
after it. The pair of dilogarithms in $A$, together with its partner obtained by reversing
the sign of $r$, then satisfies \eqref{eq:cubecond}, and Lemma \ref{lem:cube} converts each
conjugate pair into powers of $\rho$:
\[
\Li_2\!\left(\frac{u^{2}-r-2}{-(u+1)u}\right)+\Li_2\!\left(\frac{u^{2}+r-2}{-(u+1)u}\right)
=-\Li_2(\rho)+\tfrac13\Li_2(\rho^{3}),
\]
\[
\Li_2\!\left(\frac{u^{2}-r-2}{(u+1)u}\right)+\Li_2\!\left(\frac{u^{2}+r-2}{(u+1)u}\right)
=-\Li_2(-\rho)+\tfrac13\Li_2(-\rho^{3}).
\]
These are the assignments for the larger root $u=1.85941\ldots$. For the smaller root
$u=1.24200\ldots$ the roles of $\rho$ and $-\rho$ in the two conjugate pairs are
interchanged, so that the first pair evaluates to $-\Li_2(-\rho_{2})+\tfrac13\Li_2(-\rho_{2}^{3})$
and the second to $-\Li_2(\rho_{2})+\tfrac13\Li_2(\rho_{2}^{3})$; the two cases must therefore
be treated separately at this step. The final ladders are unaffected.

In both cases the blocks $B$, $J$, $C$, $H$ and the logarithmic part of $D$ contribute only
$\log^{2}$ and $\pi^{2}$ terms, which are collected using $\log\rho^{k}=k\log\rho$; the
duplication formula $\Li_2(\rho^{2})=2\Li_2(\rho)+2\Li_2(-\rho)$ then removes the terms in
$-\rho$.

For the root $u=1.85941\ldots$ this yields the radius
\begin{equation}\label{eq:rho1}
\rho=\frac14\left(-1+\sqrt{33}-\sqrt{2\bigl(9-\sqrt{33}\bigr)}\right)=0.54823\ldots,
\end{equation}
a root of $\rho^{4}+\rho^{3}-6\rho^{2}+\rho+1$, and the ladder
\begin{equation}\label{eq:ladder1}
3\Li_2(\rho^{6})+3\Li_2(\rho^{4})-8\Li_2(\rho^{3})-33\Li_2(\rho^{2})+24\Li_2(\rho)
=6\log^{2}\rho+\frac{\pi^{2}}{6} .
\end{equation}
For the root $u=1.24200\ldots$ it yields
\begin{equation}\label{eq:rho2}
\rho_{2}=\frac14\left(1+\sqrt{33}-\sqrt{2\bigl(9+\sqrt{33}\bigr)}\right)=0.32854\ldots,
\end{equation}
a root of $\rho_{2}^{4}-\rho_{2}^{3}-6\rho_{2}^{2}-\rho_{2}+1$, and
\begin{equation}\label{eq:ladder2}
\Li_2(\rho_{2}^{6})-3\Li_2(\rho_{2}^{4})-8\Li_2(\rho_{2}^{3})+21\Li_2(\rho_{2}^{2})
+24\Li_2(\rho_{2})=-6\log^{2}\rho_{2}+\frac{11\pi^{2}}{6}.
\end{equation}
Both have been verified numerically to $25$ digits. The two remaining roots of the product
above are the reciprocals of $\rho$ and $\rho_{2}$, so the family consists of exactly two
ladders. The coefficient $6$ in the base equation $\rho^{4}+\rho^{3}-6\rho^{2}+\rho+1$
parallels the coefficient $3$ in the base equation $x^{3}+3x^{2}=1$ of the Loxton--Lewin
identities.

\subsection{Four conjectural ladders found by integer relation search}

It is natural to ask whether ladders of the same shape exist for base equations not arising
from the above construction. We searched the palindromic quartic units
\begin{equation}\label{eq:palin}
x^{4}+ax^{3}+bx^{2}+ax+1=0,\qquad a,b\in\mathbb{Z},
\end{equation}
irreducible over $\QQ$ and with four real roots, applying the PSLQ algorithm to the vector
\[
\bigl(\Li_2(u),\ \Li_2(u^{2}),\ \dots,\ \Li_2(u^{N}),\ \log^{2}u,\ \pi^{2}\bigr)
\]
for $N\le 40$. The search
covered more than $1100$ such units with $|a|\le40$, $|b|\le70$, all four real embeddings of
each cyclic candidate, and degree $5$, $6$ and $8$ analogues including cyclotomic and
Gaussian-period units. No candidate outside degree four produced a relation. Within degree four, exactly two units
in the search region produced a PSLQ relation meeting the chosen bounds, and both have
elementary trigonometric roots. The identities below are therefore stated as conjectures:
they are supported by very strong numerical evidence but we have no proof, and the assertion
that only two units admit a ladder is a statement about the search region and the bounds
used, not a theorem.
For
\begin{equation}
u=2\tan\tfrac{\pi}{8}\cos\tfrac{\pi}{5},\qquad v=-2\tan\tfrac{\pi}{8}\cos\tfrac{2\pi}{5},
\end{equation}
the roots of $x^{4}+2x^{3}-7x^{2}+2x+1$ in $(-1,1)$. The quartic field $\QQ(u)$ is Galois with
group $V_{4}$ and contains $\QQ(\sqrt{10})$ as a quadratic subfield. For $x\in\{u,v\}$, PSLQ
gives
\begin{conjecture}\label{conj:p}
With $u,v$ as above,
\begin{equation}\label{eq:p1}
96\Li_2(x)-90\Li_2(x^{2})-24\Li_2(x^{3})+9\Li_2(x^{4})+16\Li_2(x^{6})-2\Li_2(x^{12})
-12\log^{2}|x|=\frac{c\,\pi^{2}}{6},
\end{equation}
with $c=17$ for $x=u$ and $c=-31$ for $x=v$. Likewise for $u=\tan\tfrac{3\pi}{20}$ and
$v=-\tan\tfrac{\pi}{20}$, the roots of $x^{4}+4x^{3}-14x^{2}+4x+1$ in $(-1,1)$, whose quartic
field is cyclic with group $C_{4}$ and contains $\QQ(\sqrt5)$ as its unique quadratic
subfield,
\begin{equation}\label{eq:p2}
34\Li_2(x)-47\Li_2(x^{2})+6\Li_2(x^{4})-2\Li_2(x^{5})+\Li_2(x^{10})-2\log^{2}|x|
=\frac{c\,\pi^{2}}{6},
\end{equation}
with $c=4$ for $x=u$ and $c=-8$ for $x=v$. In each case the two members of the pair differ
only in the constant.
\end{conjecture}

All four relations \eqref{eq:p1}--\eqref{eq:p2} were verified to $400$--$500$ digits. We have not found
a derivation of them by the methods of Sections \ref{sec:array}--\ref{sec:ladders}.

\section*{Disclosure}

The author received no external funding. There are no conflicts of interest.

Theorem \ref{thm:main} and Corollary \ref{cor:six} were obtained by the author through the
integral construction of Section \ref{sec:integral}, and it was in that form that the
identity was first established. The array \eqref{eq:array}, which exhibits the identity as a
combination of ten instances of \eqref{eq:abel}, was located afterwards with the assistance
of Anthropic's Claude Opus 5, by a linear-programming search over the configuration of
exceptional $S$-units described in Remark \ref{rem:collapse}.

\clearpage
\appendix
\section{The cancellation, and its verification}\label{app:cancel}

\begin{table}[H]
\centering
\small
\begin{tabular}{cllr}
\toprule
 & argument & occurrences & coefficient\\
\midrule
$A$ & $- \frac{t \left(t + 1\right)}{t - 3}$ & $3^{+},\;4^{+}$ & $+2$\\
$B$ & $\frac{t + 3}{t \left(t - 1\right)}$ & $1^{+},\;2^{+}$ & $+2$\\
$f_{1}$ & $\frac{t + 1}{2}$ & $7^{-2},\;9^{-2},\;10^{-2}$ & $-6$\\
$f_{2}$ & $\frac{t + 3}{2 t}$ & $1^{-},\;3^{-},\;5^{-3},\;6^{-}$ & $-6$\\
$w$ & $\frac{\left(t + 1\right) \left(t + 3\right)}{\left(t - 3\right) \left(t - 1\right)}$ & $6^{-},\;8^{-}$ & $-2$\\
\midrule
$g_{1}$ & $- \frac{t + 3}{t - 1}$ & $1^{-},\;8^{+}$ & $0$\\
$g_{2}$ & $- \frac{1}{t}$ & $1^{-},\;4^{+}$ & $0$\\
$g_{3}$ & $- \frac{t - 3}{2}$ & $1^{-},\;5^{+3},\;9^{-2}$ & $0$\\
$g_{4}$ & $- \frac{\left(t - 3\right) \left(t + 1\right)}{4 t}$ & $2^{+},\;4^{+},\;5^{-3},\;8^{+}$ & $0$\\
$g_{5}$ & $- \frac{\left(t - 3\right) \left(t + 1\right)}{4}$ & $2^{-},\;9^{+}$ & $0$\\
$g_{6}$ & $- \frac{1}{t - 1}$ & $2^{+},\;3^{-}$ & $0$\\
$g_{7}$ & $- \frac{t - 1}{4}$ & $2^{-},\;6^{+}$ & $0$\\
$g_{8}$ & $- \frac{t - 3}{t + 1}$ & $3^{+},\;8^{-}$ & $0$\\
$g_{9}$ & $- \frac{t + 1}{2}$ & $3^{-},\;5^{+3},\;10^{-2}$ & $0$\\
$g_{10}$ & $- \frac{\left(t - 1\right) \left(t + 3\right)}{4}$ & $4^{+},\;10^{-}$ & $0$\\
$g_{11}$ & $- \frac{t - 3}{4}$ & $4^{-},\;6^{+}$ & $0$\\
$g_{12}$ & $- \frac{t + 1}{t - 1}$ & $5^{-3},\;6^{+},\;7^{+2}$ & $0$\\
$g_{13}$ & $- \frac{\left(t - 1\right)^{2}}{4 t}$ & $7^{-},\;8^{+}$ & $0$\\
\bottomrule
\end{tabular}
\caption{The eighteen argument classes arising from the ten instances \eqref{eq:instances},
their weighted occurrences, and their total coefficients. The five classes above the rule
survive; the thirteen below cancel.}
\label{tab:cancel}
\end{table}

The cancellation may be reproduced in a few seconds with the following, which expands the ten
instances, reduces each argument modulo $[x]=-[1/x]=-[1-x]$, and prints the classes with
nonzero total coefficient.

\noindent\begin{minipage}{\linewidth}
\begin{verbatim}
import sympy as sp
t = sp.symbols('t')
A = [((t+3)/(t*(t-1)), -(t-1)/(t+3)),  ((t+3)/(t*(t-1)), 4*t/((t-1)*(t+3))),
     (-(t-3)/(t*(t+1)), -(t+1)/(t-3)), (-(t-3)/(t*(t+1)), -4*t/((t-3)*(t+1))),
     ((t+3)/(2*t), 4*t/((t-1)*(t+3))), ((t+3)/(2*t), 8*t/((t+1)*(t+3))),
     ((t+1)/(2*t), 4*t/(t+1)**2),      ((t+1)*(t+3)/(8*t), 2*(t+1)/(t+3)),
     (4/(t-1)**2, -(t-1)/2),           (-4/((t-1)*(t+3)), (t+3)/2)]
c = [1, 1, -1, -1, -3, -1, -1, 1, 1, 1]

def orbit(f):
    f = sp.cancel(f)
    return [(f,1), (sp.cancel(1/f),-1), (sp.cancel(1-f),-1),
            (sp.cancel(1/(1-f)),1), (sp.cancel((f-1)/f),1), (sp.cancel(f/(f-1)),-1)]

def canon(f):
    o = [(sp.factor(g), s) for g, s in orbit(f)]
    return min(o, key=lambda p: (len(sp.srepr(p[0])), sp.srepr(p[0])))

tot = {}
for (x, y), w in zip(A, c):
    args = [x, y, x*y, x*(1-y)/(1-x*y), y*(1-x)/(1-x*y)]
    for f, sgn in zip(args, [1, 1, -1, -1, -1]):
        g, s = canon(f)
        tot[g] = tot.get(g, 0) + w*sgn*s
for g, n in tot.items():
    if n: print(n, g)
\end{verbatim}
\end{minipage}

\noindent
The output is five lines, giving $A$, $B$, $f_{1}$, $f_{2}$ and $w$ with coefficients
$2$, $2$, $-6$, $-6$, $-2$ up to the choice of representative in each class.

\end{document}